\theoremstyle{plain}
\newtheorem{theorem}{Theorem}[section]
\newtheorem{lemma}{Lemma}[section]
\numberwithin{equation}{section}
\theoremstyle{definition}
\newtheorem{remark}{Remark}[section]
\newcommand{\R}{{\mathbb R}}
\begin{document}
\title{{\bf  Observation estimates for a semilinear heat equation in $\mathbb{R}^n$}\footnote{\small
This work was partially supported by
Zhejiang Provincial Natural Science Foundation of China under
Grant No. LY24A010014 and LZ21A010001.} }

\author{Guojie Zheng$^{a}$,  \; Xin Yu$^{b}$
\\
\\
$^a${\it College of Digital Technology and Engineering}\\
{\it Ningbo University of Finance $\&$  Economics, Ningbo, 315175, China}\\
$^a${\it  Institute of Information Processing and Intelligent Computing}\\
{\it Ningbo Tech University, Ningbo, 315199, China}}

\date{}

 \maketitle

\begin{abstract}
This paper studies the state observation problems for the  semilinear heat equation in $\mathbb{R}^n$.
We derive observation estimates for the equation using the logarithmic convexity property  of the frequency function (see \cite{Phung}).
As an application, we show that if two solutions  coincide on  a nonempty open subset  $\omega\subset\Omega$  at some time $T>0$, then
they must be identical.

\vspace{0.3cm}

\noindent {\bf Keywords:}~  Observation estimates; semilinear heat equation; logarithmic convexity property;
Frequency function.

\vspace{0.3cm}

\noindent {\bf AMS subject classifications:}~  35K15, 93B07.
\end{abstract}

\section{Introduction}\label{sec1}
\ \ \ \
Let $T>0$ be a positive number.
 This paper studies the following semilinear heat equation in $\mathbb{R}^n$:
 \begin{equation}\label{1.1}
   \left\{
\begin{array}{ll}
\partial_t y-\triangle y+f\big(y\big)=0,& \textrm{ in } \mathbb{R}^{n}\times(0,T],\\
y(x,0)=y_0(x),    &   \textrm{ in } \mathbb{R}^{n},
\end{array}
\right.
 \end{equation}
 where the unknown function $y=y(x,t)$ depends on the space variable $x$ and the time variable $t$,
$y_0(x)$ is the initial data and  $f(\cdot): \mathbb{R}\mapsto\mathbb{R}$ is  a given nonlinear function.
 This model plays a crucial role in PDEs theory due to its applications in physics, biology, and finance
 (see \cite{Friedman, Peng, Zou}).
 This equation describes heat conduction with nonlinear sources, population dynamics, and diffusion processes.
  The behavior of the solution is significantly affected by the nonlinear term $f(y)$.

This paper studies the state observation problems for  equation (\ref{1.1}).
The key question is whether we can determine the solution to this equation from partial measurements of the solution in the observation region.
Consider equation (\ref{1.1}) with observation region taken in a subdomain $\omega\subset \mathbb{R}^n$.
We aim to estimate $y$ over the entire space $\mathbb{R}^{n}$ at time $T$ using observation data.
The nonlinearity makes it harder to analyze than the linear case.
This question is motivated by
 the literature \cite{Phung1},  which studies a similar equation in a bounded convex domain.
 Extensive related references can be found in \cite{Buffe,  Wanglj, Lu, PW} and the rich works cited therein.
Reference \cite{Wang1} studies the relationship  between the observation region and observability for the linear heat equation
in $\mathbb{R}^n$ and shows that an observation region satisfies the observability inequality if and only if it is $\gamma-$thick at scale $L$ for some $\gamma>0$ and $L>0$. To study this problem, we need to characterize suitable conditions for the observation region $\omega$.

\textit{Notation:}
Throughout this article $B_{\rho}(x_0)$ (where $\rho>0$ and $x_0\in\mathbb{R}^n$) denotes the open ball centered at $x_{0}$ and with radius $\rho$.
Let $\mathbb{N}=\{0,1,2,\ldots\}$ and $\mathbb{N}^{+}=\mathbb{N}\setminus \{0\}$. Denote by $\overline{\Omega}$ the closure of the subset $\Omega\subset\mathbb{R}^n$.
Let $\omega$ be an open subset of $\mathbb{R}^n$ satisfying the following condition ($A_1$).
\begin{itemize}
\item[($A_1$)]
There exists a sequence of open cubes with the same size $\{I_j\}_{j\in\mathbb{N}^{+}}$ of $\mathbb{R}^n$ such that\\
$(i)$ $I_j\cap I_k=\emptyset$, when $j\neq k$;\\
$(ii)$ $\overline{\cup_{j\in\mathbb{N}^{+}}I_j}=\mathbb{R}^n$;\\
$(iii)$ There exists a positive number $r$ satisfying for each $j\in \mathbb{N}^{+}$ there is a $x_j\in I_j$ such that $B_{r}(x_j)\subset I_j\cap\omega$.
\end{itemize}
\begin{remark}
Let $L$ be the side length of the cubes $I_j$ $(j\in \mathbb{N}^+)$. It is easy to check that $\omega$ is $\gamma-$thick at scale $L$ for some $\gamma>0$ (see \cite{Egidi, Wang1} for more details).
\end{remark}

The nonlinear term $f(y)$ complicates the analysis of (\ref{1.1}), and special techniques are required to handle it.
To this end, we give an
assumption on the function $f(\cdot)$ as follows.

\begin{itemize}
\item[($A_2$)] The function $f(\cdot): \mathbb{R}\mapsto\mathbb{R}$ is locally Lipschitz and satisfies
 $f(0)=0$.
\end{itemize}

The first results are stated below.
\begin{theorem}\label{theorem1}
Suppose that $(A_1)$ and $(A_2)$ hold.
Let $y_i^0\in L^2(\mathbb{R}^n)\cap L^\infty(\mathbb{R}^n)$ $(i=1,2)$ be the initial data and let
 $y_i \in L^\infty(0,T;L^\infty(\mathbb{R}^n))$  $(i=1,2)$ be the solution  to the system (\ref{1.1}) with the initial value
$y_i^0$. Let $L_{M}>0$ be the Lipschitz constant of $f(\cdot)$ on the domain
$\mathcal{D}_{M}:=\{s\in \mathbb{R}\:|\: |s|\leq M \},$ where
 \begin{eqnarray}\label{M}
M:=\max\{\|y_i\|_{L^\infty(0, T; L^{\infty}(\mathbb{R}^n))}\:|\:i=1,2\}.
 \end{eqnarray}
 Then, we obtain the following estimates:\\
$(i)$ There exist two positive numbers  $\beta=\beta(L, r)\in(0,1)$ and  $C=C(L, r, T, L_M)$,
  such that
\begin{eqnarray}\label{1.3}
\int_{\mathbb{R}^n}|y_1-y_2|^2(x,T)dx
&\leq& \!\!C\big(\int_{\mathbb{R}^n}|y_1^0-y_2^0|^2(x)dx\big)^{1-\beta}\big(\int_{\omega}|y_1-y_2|^2(x,T)dx\big)^\beta.
\end{eqnarray}
$(ii)$ If $y_1^0\neq y_2^0$, then there exists a positive number  $C=C(L, r, T, L_M)$ such that
\begin{eqnarray}\label{1.4}
\int_{\mathbb{R}^n}|y_1^0-y_2^0|^2(x)dx &\leq& C\exp\big(C\frac{\|y_1^0-y_2^0\|_{L^2(\mathbb{R}^n)}^2}{\|y_1^0-y_2^0\|_{H^{-1}(\mathbb{R}^n)}^2}\big)\int_{\omega}|y_1-y_2|^2(x,T)dx.
\end{eqnarray}
\end{theorem}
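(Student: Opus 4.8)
The first move is to linearize. Setting $w:=y_1-y_2$ and subtracting the two copies of \dref{1.1}, the difference solves
\[
\partial_t w-\triangle w+a(x,t)\,w=0,\qquad w(\cdot,0)=y_1^0-y_2^0,
\]
where
\[
a(x,t):=\begin{cases}\dfrac{f(y_1(x,t))-f(y_2(x,t))}{y_1(x,t)-y_2(x,t)}, & y_1(x,t)\neq y_2(x,t),\\[2mm] 0, & \text{otherwise}.\end{cases}
\]
By $(A_2)$ and the uniform bound $M$ from \dref{M} the values $y_1,y_2$ stay in $\mathcal{D}_M$, so $\|a\|_{L^\infty(\mathbb{R}^n\times(0,T))}\le L_M$. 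Both assertions thus reduce to observation estimates for a linear parabolic equation with a bounded zeroth-order coefficient, the constants being allowed to depend on $L_M$.

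The analytic engine is the frequency function of \cite{Phung}. For each centre $x_j$ furnished by $(A_1)$, I would form the Gaussian-weighted energy of $w$ transported along the (backward) heat flow and invoke the logarithmic convexity of the associated frequency function; the bounded potential $a$ only perturbs the constants through $L_M T$. This should yield a \emph{local} interpolation inequality, uniform in $j$: there exist $\beta=\beta(L,r)\in(0,1)$ and $C=C(L,r,T,L_M)$ with
\[
\int_{I_j}|w(x,T)|^2\,dx\le C\Big(\int_{\mathbb{R}^n}|w(x,0)|^2\,G_j(x)\,dx\Big)^{1-\beta}\Big(\int_{B_r(x_j)}|w(x,T)|^2\,dx\Big)^{\beta},
\]
where $G_j(x)=\exp\big(-|x-x_j|^2/(8T)\big)$ is a Gaussian weight centred at $x_j$. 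Heuristically the ball $B_r(x_j)\subset I_j\cap\omega$ supplies the observation, while the infinite speed of propagation forces the global (but Gaussian-weighted) initial datum to appear on the right.

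To globalize I would sum over $j\in\mathbb{N}^{+}$. By $(A_1)(ii)$ the cubes tile $\mathbb{R}^n$, so $\sum_j\int_{I_j}|w(T)|^2=\int_{\mathbb{R}^n}|w(T)|^2$; by $(A_1)(iii)$ the balls $B_r(x_j)$ are pairwise disjoint and contained in $\omega$, so $\sum_j\int_{B_r(x_j)}|w(T)|^2\le\int_\omega|w(T)|^2$. The decisive point is that the common cube size $L$ keeps the centres $x_j$ uniformly separated, whence $\sup_{x}\sum_j G_j(x)\le C(n,L,T)<\infty$ and therefore $\sum_j\int_{\mathbb{R}^n}|w(0)|^2G_j\le C\|w(0)\|_{L^2(\mathbb{R}^n)}^2$. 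Summing the local inequalities and applying the discrete Hölder inequality $\sum_j a_j^{1-\beta}b_j^{\beta}\le\big(\sum_j a_j\big)^{1-\beta}\big(\sum_j b_j\big)^{\beta}$ then produces \dref{1.3}. I expect this passage from a single ball to the whole of $\mathbb{R}^n$ --- making the per-cube constants uniform and the Gaussian tails summable --- to be the main obstacle, since it is where the unboundedness of the domain, absent in the bounded-domain model of \cite{Phung1}, really bites.

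Finally, \dref{1.4} should follow by converting the Hölder interpolation \dref{1.3} into a linear observability bound. Writing $R:=\|w(0)\|_{L^2(\mathbb{R}^n)}^2/\|w(0)\|_{H^{-1}(\mathbb{R}^n)}^2$, the extra ingredient is a quantitative lower bound on the dissipation, $\|w(T)\|_{L^2(\mathbb{R}^n)}^2\ge \exp(-CR)\,\|w(0)\|_{L^2(\mathbb{R}^n)}^2$ with $C=C(T,L_M)$; here $R$ plays the role of the initial frequency, and the bound is again read off from the frequency function (energetically, $\frac{d}{dt}\log\|w\|_{L^2}^2=-2\|\nabla w\|^2/\|w\|^2-2\int a w^2/\|w\|^2$, the frequency $\|\nabla w\|^2/\|w\|^2$ being controlled along the flow by its quasi-monotonicity). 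Inserting this lower bound into \dref{1.3}, cancelling the common factor $\|w(0)\|_{L^2}^{2(1-\beta)}$, and raising to the power $1/\beta$ removes the exponent on the observation term and yields \dref{1.4}; the hypothesis $y_1^0\neq y_2^0$ is imposed precisely so that $R$ is well defined.
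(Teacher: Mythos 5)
Your treatment of part $(i)$ is essentially the paper's own argument. The paper proves precisely the local interpolation inequality you postulate (Theorem \ref{interpolation}), except that its right-hand side carries the localized space--time energy $\mathbb{E}_j$ of \dref{energy} (initial datum plus $\int_0^T\int_{B_{5R}(x_j)}|\varphi|^2$) rather than a Gaussian-weighted initial datum; it then sums over $j$ exactly as you do, using the disjointness of the balls $B_r(x_j)\subset\omega$ and the finite overlap of the balls $B_{5R}(x_j)$ (this plays the role of your Gaussian-tail summability), and where you invoke discrete H\"{o}lder the paper uses Young's inequality with an optimized parameter $\epsilon$, finishing with the energy bound $\mathbb{E}\le(1+Te^{2L_MT})\|\varphi(\cdot,0)\|_2^2$ to replace $\mathbb{E}$ by the initial $L^2$ norm. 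These differences are cosmetic.

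The genuine gap is in part $(ii)$. Your target inequality and its use are exactly the paper's: one needs $\|w(T)\|_2^2\ge e^{-C(T,L_M)R}\,\|w(0)\|_2^2$ with $R=\|w(0)\|_2^2/\|w(0)\|_{H^{-1}}^2$, after which \dref{1.3} and cancellation of $\|w(0)\|_2^{2(1-\beta)}$ give \dref{1.4}. But the mechanism you cite cannot deliver this bound. The quotient $N(t)=\|\nabla w(t)\|_2^2/\|w(t)\|_2^2$ is in general infinite at $t=0$, since the data are only in $L^2\cap L^\infty$; and even when finite it is not dominated by $R$: taking $\widehat{w}_0$ with unit $L^2$ mass on $\{|\xi|\le1\}$ and mass $\Lambda^{-n}$ spread over $\{|\xi|\approx\Lambda\}$ gives $R=O(1)$ while $N(0)\approx\Lambda\to\infty$. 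Consequently the quasi-monotonicity $N(t)\le e^{Ct}(N(0)+C)$ yields no bound on $\int_0^T N(t)\,dt$ in terms of $R$, and the assertion that $R$ plays the role of the initial frequency is false for this quotient. The paper's proof works one derivative lower: it introduces $\chi(t)=\|\varphi(t)\|_2^2/\|\varphi(t)\|_{H^{-1}}^2$, derives from the pair of energy identities \dref{r5.9} the bound $\chi'(t)\le 2\|F/2\|_{H^{-1}}^2/\|\varphi\|_{H^{-1}}^2\le \tfrac{1}{2}L_M^2\chi(t)$, hence $\chi(t)\le e^{L_M^2t/2}\chi(0)$, then integrates the $H^{-1}$ identity to obtain the backward estimate \dref{r5.13} in $H^{-1}$, and finally converts to $L^2$ via $\|\varphi(t)\|_2\ge\|\varphi(t)\|_{H^{-1}}$. (It also needs the preliminary claim that $\varphi(\cdot,t)\ne0$ for all $t\in(0,T]$, proved by the same device, so that $\chi$ is well defined on $[0,T]$ --- a point you pass over.) To close your argument you must replace the $\dot H^1/L^2$ frequency by this $L^2/H^{-1}$ quotient; with that substitution the rest of your outline goes through.
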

\begin{remark}
{\it
Several notes on this theorem  are given in order.

\begin{itemize}
  \item[(a)] Estimate (\ref{1.3}) is called the global interpolation inequality for equation (\ref{1.1}), which is a powerful tool in the analysis of state observation problems.

  \item[(b)] Equation (\ref{1.4}) allows us to reconstruct the solution of (\ref{1.1}) from partial observations.
    We call it an observation estimate for equation (\ref{1.1}). It provides a framework to relate the state of the system to partial observations. These estimates are essential in control theory.

    \item[(c)] Observation estimate can be used to prove unique continuation, which means that if the solution is identically zero in $\omega$ at time $T$, then it must be identically zero everywhere.
\end{itemize}}
\end{remark}

Next, we introduce another assumption on $f(\cdot)$.
\begin{itemize}
\item[($A_3$)] The function $f(\cdot): \mathbb{R}\mapsto\mathbb{R}$ belongs to $C^1(\mathbb{R})$ and satisfies  $f(0)=0$. Moreover, there are positive numbers $C>0$ and $p>1$
such that for any $x_1, x_2\in\mathbb{R}$,
\begin{eqnarray*}
|f(x_1)-f(x_2)|\leq C(|x_1|^{p-1}+|x_2|^{p-1})|x_1-x_2|.
\end{eqnarray*}
\end{itemize}
With its help, we have
\begin{theorem}\label{theorem2}
Suppose that $(A_1)$ and $(A_3)$ hold. Let $p<1+\frac{4}{n}$, and
let $y_i\in C([0,T^*]; L^2(\mathbb{R}^n))\cap L_{loc}^\infty((0,T); L^\infty(\mathbb{R}^n))$ ($i=1,2$) be the solution  to the system (\ref{1.1}) corresponding to the initial value $y_i^0\in L^2(\mathbb{R}^n)$ ($i=1,2$).  Then,
 there exist positive numbers $\beta=\beta(L, r)\in(0,1)$ and  $C=C(L, r, \|y_i^0\|_{L^2(\mathbb{R}^n)}, T)$  ($i=1,2$)
  such that
\begin{eqnarray}\label{1.5}
\int_{\mathbb{R}^n}|y_1-y_2|^2(x,T)dx\leq C \big(\int_{\omega}|y_1-y_2|^2(x,T)dx\big)^\beta.
\end{eqnarray}
If $y_1(\cdot,T)=y_2(\cdot,T)$ in $\omega$, then
$y_1=y_2$ in $\Omega\times[0,T]$.
\end{theorem}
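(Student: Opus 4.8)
The plan is to reduce the nonlinear problem for the difference $w:=y_1-y_2$ to a linear heat equation with a potential, and then to import the interpolation inequality of Theorem~\ref{theorem1} after first using the subcritical structure of $f$ to gain $L^\infty$ regularity away from $t=0$. Since $f\in C^1(\mathbb{R})$, I would set
\[
a(x,t):=\int_0^1 f'\big(\theta y_1(x,t)+(1-\theta)y_2(x,t)\big)\,d\theta,
\]
so that $f(y_1)-f(y_2)=a(x,t)\,w$ and $w$ solves $\partial_t w-\Delta w+a(x,t)\,w=0$ with $w(\cdot,0)=y_1^0-y_2^0$. Hypothesis $(A_3)$ gives the pointwise bound $|a(x,t)|\le C\big(|y_1(x,t)|^{p-1}+|y_2(x,t)|^{p-1}\big)$, so $a$ is bounded wherever $y_1,y_2$ are, but may blow up as $t\to 0^+$ since the data lie only in $L^2(\mathbb{R}^n)$.

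Next I would fix $\tau\in(0,T)$ (say $\tau=T/2$) and exploit $p<1+\frac{4}{n}$. In this $L^2$-subcritical range the $L^2\to L^\infty$ smoothing of the heat flow, together with the assumed $L_{loc}^\infty((0,T);L^\infty)$ regularity, bounds $M_\tau:=\max_i\|y_i\|_{L^\infty([\tau,T]\times\mathbb{R}^n)}$ by a quantity depending only on $\|y_i^0\|_{L^2}$, $\tau$ and $T$. Restarting at $t=\tau$, the pair $(y_1,y_2)$ then meets the hypotheses of Theorem~\ref{theorem1} on $[\tau,T]$ (its data $y_i(\cdot,\tau)$ lie in $L^2\cap L^\infty$, and $(A_3)$ implies $(A_2)$), so \eqref{1.3} yields
\[
\|w(\cdot,T)\|_{L^2(\mathbb{R}^n)}^2\le C\,\|w(\cdot,\tau)\|_{L^2(\mathbb{R}^n)}^{2(1-\beta)}\Big(\int_\omega|w|^2(x,T)\,dx\Big)^{\beta},
\]
with $\beta=\beta(L,r)$ independent of $\tau$ and $C=C(L,r,T,L_{M_\tau})$. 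To remove the middle factor I would run the $L^2$ energy estimate for $w$ on $[0,\tau]$: multiplying by $w$ gives $\frac12\frac{d}{dt}\|w\|_2^2+\|\nabla w\|_2^2=-\int a w^2$, and controlling $\int|a|w^2$ by H\"older plus Gagliardo--Nirenberg (absorbing the resulting gradient power into $\|\nabla w\|_2^2$) leaves a Gronwall factor $\int_0^\tau\|a(\cdot,t)\|_{L^{\sigma}}^{q}\,dt$, with admissible exponents $\sigma,q$, that is finite precisely because $p-1<\frac4n$. Gronwall then gives $\|w(\cdot,\tau)\|_{L^2}\le C\|w(\cdot,0)\|_{L^2}\le 2C\max_i\|y_i^0\|_{L^2}$, which I absorb into the constant to obtain \eqref{1.5} with $C=C(L,r,\|y_i^0\|_{L^2},T)$.

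For the unique continuation statement, if $y_1(\cdot,T)=y_2(\cdot,T)$ on $\omega$ then the right-hand side of \eqref{1.5} vanishes, forcing $w(\cdot,T)=0$ a.e.\ in $\mathbb{R}^n$. I would then propagate this backward by backward uniqueness for $\partial_t w-\Delta w+aw=0$: on each $[\tau,T]$ the potential is bounded, so the logarithmic convexity (up to a harmless exponential time weight) of $t\mapsto\|w(\cdot,t)\|_{L^2}^2$ — the frequency-function estimate behind Theorem~\ref{theorem1} — applies, and a positive log-convex function cannot vanish at the endpoint $T$ without vanishing identically; hence $w\equiv 0$ on $[\tau,T]$. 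Letting $\tau\to 0^+$ and using $w\in C([0,T];L^2(\mathbb{R}^n))$ gives $w\equiv0$ on $[0,T]$, i.e.\ $y_1=y_2$ on $\mathbb{R}^n\times[0,T]$.

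The main obstacle is the middle step: because the data are only $L^2$, the potential $a\sim|y_i|^{p-1}$ is genuinely singular near $t=0$, so Theorem~\ref{theorem1} cannot be applied on all of $[0,T]$. The subcritical bound $p<1+\frac4n$ is exactly what makes the smoothing estimate and the $[0,\tau]$ energy estimate close with constants depending only on $\|y_i^0\|_{L^2}$, $T$ (and $L,r$), while keeping $\beta$ independent of $\tau$; verifying these uniform dependences is the delicate point of the argument.
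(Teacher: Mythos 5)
Your proposal is correct, and its core — the main estimate \eqref{1.5} — follows exactly the paper's route: shift time to $\tau=T/2$, use the subcritical smoothing estimate \eqref{3.5} of Theorem \ref{theorem3.2} to see that $y_i(\cdot,\tau)\in L^2(\mathbb{R}^n)\cap L^\infty(\mathbb{R}^n)$ with $M_\tau$ and hence $L_{M_\tau}$ controlled by $\|y_i^0\|_{L^2(\mathbb{R}^n)}$ and $T$, apply Theorem \ref{theorem1}(i) on $[\tau,T]$, and absorb the factor $\|w(\cdot,\tau)\|_{L^2(\mathbb{R}^n)}^{2(1-\beta)}$ into the constant. Two points of comparison. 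First, your Gronwall energy estimate on $[0,\tau]$ (whose admissibility indeed hinges on $p<1+\frac4n$ making the singular potential integrable in time) is correct but not needed: the paper bounds $\|w(\cdot,T/2)\|_{L^2}\leq\|y_1(\cdot,T/2)\|_{L^2}+\|y_2(\cdot,T/2)\|_{L^2}\leq C(\|y_1^0\|_{L^2}+\|y_2^0\|_{L^2})$ directly from \eqref{3.5}, which is simpler and gives the same constant dependence. Second, for unique continuation you go forward-then-backward: \eqref{1.5} gives $w(\cdot,T)=0$ on all of $\mathbb{R}^n$, and then you invoke classical backward uniqueness (log-convexity for the heat equation with a potential bounded on $[\tau,T]\times\mathbb{R}^n$) to kill $w$ on $[\tau,T]$. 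The paper instead applies part (ii) of Theorem \ref{theorem1}, i.e.\ the observation estimate \eqref{1.4}, on the shifted interval: vanishing of the observation at time $T$ directly forces $\varphi(\cdot,\delta)=0$ for every $\delta\in(0,T)$, and then $\delta\to0^+$ with continuity in $L^2$ concludes. The two are morally equivalent — the backward-uniqueness mechanism you cite is precisely what Step 2 of the paper's proof of Theorem \ref{theorem1} establishes via the quotient $\|\varphi\|_{L^2}^2/\|\varphi\|_{H^{-1}}^2$ — but the paper's version keeps the argument self-contained, whereas yours imports backward uniqueness as an external (though standard) result; if you want to avoid that import, replace your backward-uniqueness step by a direct application of \eqref{1.4} on $[\tau,T]$, exactly as in \eqref{r5.16}.
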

\begin{remark}\label{remark1.3}
A remark on this theorem  is given below.
{\it
\begin{itemize}
  \item[(d)] Estimate (\ref{1.5}) also implies
   that small changes in the observation lead to small changes in the estimated state.
  It guarantees that solutions of equation (\ref{1.1}) depend continuously on the observation.
  It is regarded as a conditional stability estimate for (\ref{1.1}) in inverse problem (see \cite{Dou, Isakov}).

\end{itemize}}
\end{remark}
 These results provide a framework for estimating the solution from partial observations and have potential applications in control theory and inverse problem. Future work may explore extending these techniques to more general nonlinear parabolic equations.

We organize the paper as follows: In Section 2, some preliminary results are presented.  Section 3 is devoted to the
well-posedness of  the system (\ref{1.1}). In Section 4, we will give some estimates for an auxiliary system.
 In section 5, we will prove our main results.

\section{Well-posedness of  the system (\ref{1.1})}\label{sec3}
\ \ \ \
We start by introducing notation.
We use  $\|\cdot\|_p$ (with  $(p\in[1,+\infty])$)  to denote the norm of
$L^p(\mathbb{R}^n)$.  We use  $C(\ldots)$ to denote a positive constant depending on the enclosed parameters.

We denote by $e^{t\triangle}f$ the semigroup generated by $\triangle$ on $L^q(\mathbb{R}^n)$, where $q\in[1,+\infty)$.
It is well known that
 \begin{eqnarray*}
e^{t\triangle}\varphi:=\int_{\mathbb{R}^n}K_{t}(x-y)\varphi(y)dy, \:\: \textrm{for any} \:\varphi\in L^q(\mathbb{R}^n),
\end{eqnarray*}
where $K_{t}(x):=\frac{1}{t^{n/2}}e^{-\frac{|x|^2}{4t}}\:(t>0)$ is the heat kernel.
 The standard $L^p-L^q$ estimate is provided in Section 1 of \cite{Giga} (see also \cite{Daners}) and we present it as follows.
\begin{lemma}\label{lemma2.1}
If $1\leq q\leq p \leq+\infty$ and $\varphi\in L^q(\mathbb{R}^n)$, then
 \begin{eqnarray}\label{2.1}
\|e^{t\triangle}\varphi\|_{p}\leq (4\pi t)^{-\frac{n}{2}(\frac{1}{q}-\frac{1}{p})}\|\varphi\|_{q},
\;\mbox{ for }\;t>0.
 \end{eqnarray}
\end{lemma}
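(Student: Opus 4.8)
The plan is to realize $e^{t\triangle}$ as convolution against the Gaussian kernel and then to invoke Young's convolution inequality, so that the whole estimate reduces to a scaling computation of one $L^s$ norm of the kernel. First I would recall that $e^{t\triangle}\varphi = G_t * \varphi$, where $G_t(x) = (4\pi t)^{-n/2}e^{-|x|^2/(4t)}$ is the heat kernel normalized so that $\int_{\mathbb{R}^n} G_t\,dx = 1$ (this is the kernel $K_t$ of the preceding paragraph up to its normalizing constant; I keep the convention under which $e^{t\triangle}$ is the contraction semigroup on every $L^q$). Then, for exponents obeying $1 + \frac{1}{p} = \frac{1}{s} + \frac{1}{q}$, Young's inequality gives $\|G_t * \varphi\|_{p} \le \|G_t\|_{s}\,\|\varphi\|_{q}$.

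The next step is to check admissibility of the Young exponent and then compute $\|G_t\|_s$. Defining $s$ by $\tfrac{1}{s} := 1 - \bigl(\tfrac{1}{q} - \tfrac{1}{p}\bigr)$, the hypothesis $q \le p$ yields $0 \le \tfrac{1}{q} - \tfrac{1}{p} \le \tfrac{1}{q} \le 1$, hence $\tfrac{1}{s} \in [0,1]$, i.e. $s \in [1,\infty]$; so Young's inequality applies across the whole range $1 \le q \le p \le \infty$, including the endpoints $q = p$ (where $s = 1$ and $\|G_t\|_1 = 1$) and $p = \infty$. Using the scaling $G_t(x) = t^{-n/2}G_1(x/\sqrt{t})$ and the change of variables $y = x/\sqrt{t}$, one obtains $\|G_t\|_s = t^{-\frac{n}{2}(1 - 1/s)}\|G_1\|_s$, and by the definition of $s$ the exponent is exactly $-\tfrac{n}{2}\bigl(\tfrac{1}{q} - \tfrac{1}{p}\bigr)$. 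A direct Gaussian integral gives $\|G_1\|_s = (4\pi)^{-\frac{n}{2}(1 - 1/s)}\, s^{-n/(2s)}$, and since $s^{1/s} \ge 1$ for $s \ge 1$ the residual factor satisfies $s^{-n/(2s)} \le 1$ and may be discarded. Combining these yields
\[
\|e^{t\triangle}\varphi\|_{p} \le \|G_t\|_{s}\,\|\varphi\|_{q} \le (4\pi t)^{-\frac{n}{2}\left(\frac{1}{q} - \frac{1}{p}\right)} s^{-\frac{n}{2s}}\,\|\varphi\|_{q} \le (4\pi t)^{-\frac{n}{2}\left(\frac{1}{q} - \frac{1}{p}\right)}\|\varphi\|_{q},
\]
which is precisely \dref{2.1}.

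I expect no genuine difficulty here, since this is the classical $L^q$–$L^p$ smoothing estimate; the only care needed is bookkeeping. The two points to verify cleanly are that the admissible Young exponent $s$ stays in $[1,\infty]$ over the entire parameter range, and that the Gaussian $L^s$ norm reproduces the constant $4\pi$ to the correct power with the leftover $s$-dependent factor being $\le 1$, so that the sharp-looking bound survives. The endpoint cases $p = \infty$ and $q = p$ can also be read off directly and are immediate consistency checks.
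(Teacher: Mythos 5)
Your proof is correct and complete: the reduction to Young's convolution inequality, the scaling computation of $\|G_t\|_s$, and the observation that the residual factor $s^{-n/(2s)}\le 1$ for $s\ge 1$ together give exactly \dref{2.1}, with the endpoint cases $q=p$ and $p=\infty$ handled. The paper itself offers no proof of this lemma --- it simply cites Section~1 of the Giga--Giga--Saal book (and Daners) --- and your argument is precisely the standard one found in those references, so there is nothing to compare beyond noting that you supplied what the paper delegates to the literature. One genuinely useful point in your write-up: you correctly flagged that the kernel $K_t$ as written in the paper omits the normalizing factor $(4\pi)^{-n/2}$, and you worked with the properly normalized Gaussian $G_t(x)=(4\pi t)^{-n/2}e^{-|x|^2/(4t)}$; with the paper's unnormalized $K_t$ the stated constant in \dref{2.1} would be off by a factor $(4\pi)^{n/2}$, so your convention is the one under which the lemma is true as stated.
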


Using this lemma,  we obtain the estimate for  $\|e^{t\triangle}\|_{L^p\rightarrow L^q}$
 , which is crucial for proving the well-posedness of equation (\ref{1.1}). This leads us to Theorem \ref{theorem3.1}.
\begin{theorem}\label{theorem3.1}
 If $(A_2)$ holds and $y_0\in L^q(\mathbb{R}^n)\cap L^\infty(\mathbb{R}^n)$ ($q\geq1$), then equation (\ref{1.1}) admits  a unique solution  $y\in  L^\infty((0,T^*); L^\infty(\mathbb{R}^n))\cap C([0,T^*]; L^q(\mathbb{R}^n))$ for some positive constant $T^*$.
\end{theorem}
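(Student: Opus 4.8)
The plan is to recast (\ref{1.1}) in its mild (Duhamel) form and to solve it by the Banach fixed point theorem. A function $y$ is a mild solution on $[0,T^*]$ if
\begin{eqnarray*}
y(t)=e^{t\triangle}y_0-\int_0^t e^{(t-s)\triangle}f(y(s))\,ds,\qquad t\in[0,T^*].
\end{eqnarray*}
Set $M:=2\|y_0\|_\infty$ and let $L_M$ be the Lipschitz constant of $f$ on $\mathcal{D}_M=\{s\in\mathbb{R}\:|\:|s|\le M\}$; since $f(0)=0$, assumption $(A_2)$ yields $|f(s)|=|f(s)-f(0)|\le L_M|s|$ for $|s|\le M$. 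I would work in the closed subset
\begin{eqnarray*}
\mathcal{B}:=\Big\{y\in C([0,T^*];L^q(\mathbb{R}^n)):\ \sup_{t\in[0,T^*]}\|y(t)\|_\infty\le M,\ \sup_{t\in[0,T^*]}\|y(t)\|_q\le 2\|y_0\|_q\Big\}
\end{eqnarray*}
of the complete space $C([0,T^*];L^q(\mathbb{R}^n))$, equipped with the metric $d(y,z)=\sup_{t}\|y(t)-z(t)\|_q$, and define $\Phi(y)$ to be the right-hand side of the Duhamel formula. Here $\mathcal{B}$ is closed because $L^q$-convergence passes to an a.e.-convergent subsequence, which preserves the pointwise bound $\|\cdot\|_\infty\le M$.

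The self-mapping property follows from Lemma \ref{lemma2.1}. Taking $p=q$ there makes the prefactor $(4\pi t)^{-\frac{n}{2}(\frac1q-\frac1p)}$ equal to $1$, so the heat semigroup is a contraction on both $L^\infty(\mathbb{R}^n)$ and $L^q(\mathbb{R}^n)$; hence for $y\in\mathcal{B}$,
\begin{eqnarray*}
\|\Phi(y)(t)\|_\infty\le\|y_0\|_\infty+\int_0^t\|f(y(s))\|_\infty\,ds\le\|y_0\|_\infty+L_M M\,T^*,
\end{eqnarray*}
and the analogous bound holds for $\|\Phi(y)(t)\|_q$ with $M$ replaced by $2\|y_0\|_q$. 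Choosing $T^*\le 1/(2L_M)$ forces both quantities back under the thresholds defining $\mathcal{B}$, so $\Phi(\mathcal{B})\subset\mathcal{B}$. For the contraction estimate I would use the Lipschitz bound on $\mathcal{D}_M$: for $y,z\in\mathcal{B}$, pointwise in $s$ one has $\|f(y(s))-f(z(s))\|_q\le L_M\|y(s)-z(s)\|_q$, whence
\begin{eqnarray*}
\|\Phi(y)(t)-\Phi(z)(t)\|_q\le\int_0^t\|f(y(s))-f(z(s))\|_q\,ds\le L_M\,T^*\,d(y,z),
\end{eqnarray*}
so that $\Phi$ is a contraction once $T^*<1/L_M$. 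The Banach fixed point theorem then yields a unique fixed point $y\in\mathcal{B}$, which is the desired mild solution; uniqueness in the full class also follows from a Gronwall argument applied to the $L^q$-difference of two solutions.

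It remains to record the regularity. Boundedness in $L^\infty((0,T^*);L^\infty(\mathbb{R}^n))$ is built into $\mathcal{B}$, and continuity $y\in C([0,T^*];L^q(\mathbb{R}^n))$ follows because $t\mapsto e^{t\triangle}y_0$ is strongly continuous on $L^q(\mathbb{R}^n)$ for $q<\infty$ and the Duhamel integral depends continuously on $t$. I expect the main subtlety to be the simultaneous control of two norms: the $L^\infty$ bound is what keeps the solution inside the region $\mathcal{D}_M$ on which $f$ is Lipschitz, while the contraction and the continuity statement live at the $L^q$ level. One must resist asking for $L^\infty$-continuity in time, since the heat semigroup is not strongly continuous on $L^\infty(\mathbb{R}^n)$; the theorem claims only $L^\infty$-boundedness, which is exactly what the fixed-point scheme provides.
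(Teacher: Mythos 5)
Your proposal is correct and follows essentially the same route as the paper: the mild (Duhamel) formulation solved by the Banach fixed point theorem on a short time interval, on a ball bounded simultaneously in $L^q(\mathbb{R}^n)$ and $L^\infty(\mathbb{R}^n)$ with $T^*$ chosen small relative to the local Lipschitz constant of $f$, followed by a Gronwall argument for uniqueness in the full class. The only (harmless) deviation is that you run the contraction in the $\sup_t\|\cdot\|_q$ metric with separate radii $2\|y_0\|_q$ and $2\|y_0\|_\infty$, whereas the paper contracts in the $\sup_t\|\cdot\|_\infty$ metric with a single radius; if anything, your choice makes the closedness/completeness of the ball easier to justify.
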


Next, we will discuss this problem further.
While Theorem \ref{theorem3.1} establishes well-posedness under assumption $(A_2$), we now extend our analysis by considering a broader class of initial data. The following theorem addresses the case when $y_0$ only belongs to $L^q(\mathbb{R}^n)$.
\begin{theorem}\label{theorem3.2}
If $(A_3)$ holds, and $y_0\in L^q(\mathbb{R}^n)$, (where $q>\frac{n(p-1)}{2}$, and $q\geq1$) then equation (\ref{1.1}) admits a unique solution  $y\in C([0,T^*]; L^q(\mathbb{R}^n))\cap L_{loc}^\infty((0,T^*); L^\infty(\mathbb{R}^n))$ for some positive constant $T^*$, and $y$
 satisfies
 \begin{eqnarray}\label{3.5}
 \|y(t)\|_{q}+t^{\frac{n}{2q}}\|y(t)\|_\infty\leq C\|y_0\|_{q},\: 0<t\leq T^*,
 \end{eqnarray}
 where the positive constant $C=C(p, q, \|y_0\|_q, T^*)$.
\end{theorem}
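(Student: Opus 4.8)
The plan is to construct the solution via the Banach fixed point theorem applied to the \emph{mild} (Duhamel) formulation of (\ref{1.1}). A mild solution satisfies
$$y(t) = e^{t\triangle}y_0 - \int_0^t e^{(t-s)\triangle}f(y(s))\,ds =: \Phi(y)(t).$$
I would work in a closed ball of the complete metric space
$$X := \Big\{y \in C([0,T^*];L^q(\mathbb{R}^n)) : \sup_{0<t\le T^*} t^{\frac{n}{2q}}\|y(t)\|_\infty < \infty\Big\},$$
equipped with the norm $\|y\|_X := \sup_{0<t\le T^*}\|y(t)\|_q + \sup_{0<t\le T^*} t^{\frac{n}{2q}}\|y(t)\|_\infty$, and show that for $T^*$ small enough $\Phi$ maps a ball $\{\,\|y\|_X \le \rho\,\}$ into itself and is a contraction there. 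The weight $t^{n/2q}$ is dictated by Lemma \ref{lemma2.1}: taking $p=\infty$ gives $\|e^{t\triangle}y_0\|_\infty \le (4\pi t)^{-n/(2q)}\|y_0\|_q$, while $p=q$ gives $\|e^{t\triangle}y_0\|_q \le \|y_0\|_q$, so the linear part already obeys the a priori bound (\ref{3.5}) and fixes the correct scaling for the nonlinearity.

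The key structural input is $(A_3)$: since $f(0)=0$, the growth hypothesis with $x_2=0$ yields the pointwise bound $|f(y)|\le C|y|^{p}$, and for differences $|f(y_1)-f(y_2)|\le C(|y_1|^{p-1}+|y_2|^{p-1})|y_1-y_2|$. To estimate $\Phi(y)$ in $X$, I would bound the Duhamel term separately in $L^q$ and in $L^\infty$. In each case I apply Lemma \ref{lemma2.1} with an intermediate exponent to $e^{(t-s)\triangle}f(y(s))$, estimate $\|f(y(s))\|$ in a suitable Lebesgue space by a power $\|y(s)\|^p$, and interpolate that space between $L^q$ and $L^\infty$ via $\|y(s)\|_\infty \le \|y\|_X\, s^{-n/(2q)}$. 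This reduces each contribution to a scalar Beta-type integral $\int_0^t (t-s)^{-a} s^{-b}\,ds = t^{1-a-b}B(1-a,1-b)$, whose convergence and leading time factor I then track.

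The main obstacle, and the place where the subcriticality hypothesis $q>\frac{n(p-1)}{2}$ is exactly consumed, is in verifying that these Beta integrals converge (that is, $a<1$ and $b<1$) and that the resulting time power is strictly positive, so that the nonlinear contributions carry a factor $T^{*\,\sigma}$ with $\sigma>0$ that can be made small. Concretely, $q>n(p-1)/2$ is precisely what makes the $s\to 0^+$ singularity from the interpolation weight integrable while leaving a positive surplus in the exponent; this simultaneously gives the self-mapping property and the contraction estimate $\|\Phi(y_1)-\Phi(y_2)\|_X \le C(\rho)\,T^{*\,\sigma}\,\|y_1-y_2\|_X$. Choosing $\rho \sim \|y_0\|_q$ and $T^*$ small, the unique fixed point is the desired solution; the bound (\ref{3.5}) with $C=C(p,q,\|y_0\|_q,T^*)$ is read off from the ball constraint, and membership in $C([0,T^*];L^q)$ up to $t=0$ follows from strong continuity of the heat semigroup on $L^q$ together with the vanishing as $t\to 0^+$ of the (subcritical) Duhamel term. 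Uniqueness in the stated class follows from the same difference estimate by a Gronwall/absorption argument.
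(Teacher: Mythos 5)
Your skeleton---the Duhamel formulation, a contraction in a time-weighted space, and the subcritical exponent condition feeding into convergence of Beta integrals---is exactly the paper's strategy, but your choice of auxiliary norm $\sup_{0<t\le T^*} t^{\frac{n}{2q}}\|y(t)\|_\infty$ creates a genuine gap that the paper's choice of space avoids. For the weighted $L^\infty$ component of $\Phi(y)$, a single application of Lemma \ref{lemma2.1} from $L^r$ into $L^\infty$, followed by $\|f(y(s))\|_r\le C\|y(s)\|_{pr}^p$ and interpolation of $L^{pr}$ between $L^q$ and $L^\infty$, produces the kernel $(t-s)^{-a}s^{-b}$ with $a=\frac{n}{2r}$ and $b=\frac{np}{2q}-\frac{n}{2r}$; the sum $a+b=\frac{np}{2q}$ is \emph{independent of the intermediate exponent} $r$ (it is forced by parabolic scaling). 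Hence your requirement ``$a<1$ and $b<1$'' can be met only if $\frac{np}{2q}<2$, i.e. $q>\frac{np}{4}$, and this is \emph{not} implied by the hypotheses $q>\frac{n(p-1)}{2}$, $q\ge 1$ when $p<2$: for instance $n=4$, $p=\frac{3}{2}$, $q=\frac{6}{5}$ satisfies the theorem's assumptions, yet $\frac{np}{2q}=\frac{5}{2}$, so every choice of $r$ leaves a non-integrable singularity at $s=t$ or at $s=0$. There are two standard repairs. One is to split the Duhamel integral at $s=\frac{t}{2}$: on $(0,\frac{t}{2}]$ use the $L^q\to L^\infty$ bound, where $(t-s)^{-\frac{n}{2q}}\le (\frac{t}{2})^{-\frac{n}{2q}}$ is harmless because it is never integrated near its singularity, and on $[\frac{t}{2},t]$ use $\|e^{(t-s)\triangle}f(y(s))\|_\infty\le\|f(y(s))\|_\infty$, where $s^{-\frac{np}{2q}}\le (\frac{t}{2})^{-\frac{np}{2q}}$ is harmless; both pieces then yield the factor $T^{*\,\sigma}$ with $\sigma=1-\frac{n(p-1)}{2q}>0$. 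The other is the paper's route: contract in the norm $\sup_t\|y(t)\|_q+\sup_t t^{\alpha}\|y(t)\|_{pq}$ with $\alpha=\frac{n(p-1)}{2pq}$, where the only exponents that occur are $\alpha$ and $\alpha p$, both $<1$ under subcriticality alone, and recover the $L^\infty$ bound (\ref{3.5}) \emph{a posteriori} by writing $f(y)=a(x,t)y$ with $a\in L^\infty((\frac{t}{2},t);L^{\sigma})$, $\sigma=\frac{pq}{p-1}>\frac{n}{2}$, and invoking the linear estimate of Lemma \ref{lemma2.3} on $(\frac{t}{2},t)$.

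A second, lesser gap is your last sentence: ``uniqueness in the stated class follows from the same difference estimate by a Gronwall/absorption argument.'' The contraction gives uniqueness only among functions obeying the weighted bound $t^{\frac{n}{2q}}\|y(t)\|_\infty\le C$ (or $t^{\alpha}\|y(t)\|_{pq}\le C$) near $t=0$, whereas the theorem asserts uniqueness in $C([0,T^*];L^q(\mathbb{R}^n))\cap L_{loc}^\infty((0,T^*);L^\infty(\mathbb{R}^n))$, a class in which a competing solution $\bar y$ carries no a priori rate as $s\to 0^+$; the Gronwall kernel $\|\bar y(s)\|_\infty^{p-1}$ may then fail to be integrable near $s=0$, and the absorption argument does not start. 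The paper devotes Step 3 of its proof to precisely this point: for $s>0$ one has $\bar y(s)\in L^q\cap L^\infty$, so uniqueness for bounded data (Theorem \ref{theorem3.1}) gives $\bar y(t+s)=U(t)\bar y(s)$, and the uniform existence time of the contraction (depending only on $\sup_s\|\bar y(s)\|_q$) transfers the weighted bound $t^{\alpha}\|\bar y(t+s)\|_{pq}\le 2M$ to $\bar y$ itself; only after this does the Gronwall argument close. You need an analogous continuation step in your framework.
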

To prove Theorem \ref{theorem3.2}, we analyze a related linear parabolic equation with a potential term.
  \begin{equation}\label{2.2}
   \left\{
\begin{array}{ll}
\partial_t u(x,t)-\triangle u(x,t)+a(x,t)u(x,t)=0,& \textrm{ in } \mathbb{R}^{n}\times(0,T],\\
u(x,0)=u_0(x),    &   \textrm{ in } \mathbb{R}^{n}.
\end{array}
\right.
 \end{equation}
The following lemma provides a priori estimate for such equation,  which is crucial in our proof.
\begin{lemma}\label{lemma2.3}
Let $\sigma$ be a positive number with $\sigma>\frac{n}{2}$ and $\sigma\geq1$, and let $a(x,t)\in L^\infty((0,T);L^{\sigma}(\mathbb{R}^n))$. If $u_0\in L^{\gamma}(\mathbb{R}^n)$ $(1\leq \gamma<+\infty)$, then, equation (\ref{2.2}) admits a unique solution
 $u\in C([0,T];L^\gamma(\mathbb{R}^n)\cap L^\infty_{loc}((0,T); L^\infty(\mathbb{R}^n))$. Moreover, there exists
 a positive number $C=C(n,\sigma, \gamma)$  such that for $t\in(0,T]$,
 \begin{eqnarray}\label{2.3a}
\|u(t)\|_{\infty}\leq Ce^{C\mathcal{L}^{\vartheta}t}t^{-\frac{n}{2\gamma}}\|u_0\|_{\gamma},
 \end{eqnarray}
 where
 \begin{eqnarray}\label{2.3}
\mathcal{L}=\|a\|_{L^\infty((0,T);L^{\sigma})} \textrm{ and } \vartheta=\frac{2\sigma}{2\sigma-n}.
 \end{eqnarray}
\end{lemma}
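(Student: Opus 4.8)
The plan is to pass to the mild (Duhamel) formulation
\[
u(t)=e^{t\triangle}u_0-\int_0^t e^{(t-s)\triangle}\big(a(s)u(s)\big)\,ds,
\]
and to obtain existence and uniqueness by a contraction argument in the weighted space
\[
X_T=\Big\{u\in C((0,T];L^\infty(\mathbb{R}^n)):\ \|u\|_{X_T}:=\sup_{0<t\le T}t^{\frac{n}{2\gamma}}\|u(t)\|_\infty<\infty\Big\},
\]
together with strong continuity into $L^\gamma$. Lemma \ref{lemma2.1} supplies the two smoothing bounds that drive the whole argument: applied with $q=\gamma$, $p=\infty$ it controls the free term by $\|e^{t\triangle}u_0\|_\infty\le(4\pi t)^{-\frac{n}{2\gamma}}\|u_0\|_\gamma$; and, since $\|a(s)u(s)\|_\sigma\le\mathcal{L}\,\|u(s)\|_\infty$, applied with $q=\sigma$, $p=\infty$ it gives $\|e^{(t-s)\triangle}(a(s)u(s))\|_\infty\le C(t-s)^{-\frac{n}{2\sigma}}\mathcal{L}\,\|u(s)\|_\infty$. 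The hypothesis $\sigma>\frac n2$ is exactly what makes the kernel exponent $\frac{n}{2\sigma}$ strictly less than $1$, so the convolution is only weakly singular and the map is a contraction on $X_T$ for $T$ small. Since the equation is linear, the a priori bound below upgrades this to a solution on all of $[0,T]$ and yields uniqueness (the difference of two solutions solves the homogeneous integral equation and is killed by the same Gronwall step).

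For the a priori estimate, inserting the two bounds into the mild formula produces the singular integral inequality
\[
\|u(t)\|_\infty\le C\,t^{-\frac{n}{2\gamma}}\|u_0\|_\gamma+C\mathcal{L}\int_0^t(t-s)^{-\frac{n}{2\sigma}}\|u(s)\|_\infty\,ds.
\]
Writing $\beta:=1-\frac{n}{2\sigma}\in(0,1)$, so that $\vartheta=\frac1\beta=\frac{2\sigma}{2\sigma-n}$, this is precisely the hypothesis of Henry's generalized (weakly singular) Gronwall lemma with kernel $(t-s)^{\beta-1}$, whose conclusion is a bound of the form $C\,g(t)\exp\!\big(C(\mathcal{L}\,\Gamma(\beta))^{1/\beta}t\big)$. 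This simultaneously reproduces the prefactor $t^{-n/(2\gamma)}\|u_0\|_\gamma$ and the exponential factor $\exp(C\mathcal{L}^{\vartheta}t)$; the exponent $\vartheta$ appears as the reciprocal of the kernel exponent $\beta$, which is the structural reason the estimate (\ref{2.3a}) carries $\mathcal{L}^\vartheta$ rather than $\mathcal{L}$.

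The one delicate point is that the simplest form of the singular Gronwall lemma needs the inhomogeneity $t^{-n/(2\gamma)}$ to be locally integrable, i.e. $\gamma>\frac n2$, which is not guaranteed for the stated range $\gamma\ge1$. For small $\gamma$ the $t\to0^+$ singularity coming from the low integrability of $u_0$ is too strong to be absorbed directly against a potential of only finite exponent $\sigma$. I would circumvent this by first establishing the clean, weight-free bound $\|u(t)\|_\gamma\le Ce^{C\mathcal{L}^\vartheta t}\|u_0\|_\gamma$: here one takes $\|a(s)u(s)\|_q\le\mathcal{L}\|u(s)\|_\gamma$ with $\frac1q=\frac1\sigma+\frac1\gamma$ and the $L^{q}\to L^{\gamma}$ smoothing of Lemma \ref{lemma2.1}, whose kernel exponent is again $\frac{n}{2\sigma}<1$, so no time weight occurs and the constant inhomogeneity $\|u_0\|_\gamma$ is harmless for every $\gamma\ge1$. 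One then raises the integrability through a fixed finite chain $\gamma=p_0<p_1<\dots<p_k=\infty$, at each stage interpolating the factor $\|u(s)\|$ between the uniformly controlled $L^\gamma$ norm and the current $L^{p_j}$ norm so as to temper the endpoint singularity at $s=0$, while Lemma \ref{lemma2.1} keeps the kernel singularity at $s=t$ below $1$; the accumulated time weights telescope to $t^{-n/(2\gamma)}$ and the finitely many exponential factors combine into a single $\exp(C\mathcal{L}^\vartheta t)$. This is the $L^p$–$L^q$ iteration in the spirit of \cite{Giga}.

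I expect this bootstrap to be the main technical obstacle: the difficulty is the simultaneous balancing of the two competing singularities — the $t\to0^+$ blow-up forced by low data integrability and the $s\to t$ kernel singularity forced by the finite exponent $\sigma$ of the potential — in such a way that one lands exactly on the sharp weight $t^{-n/(2\gamma)}$ and the sharp exponent $\vartheta$. By contrast, the contraction yielding well-posedness in $X_T$ and the identification of $\vartheta=\frac{2\sigma}{2\sigma-n}$ as the reciprocal of $\beta$ are routine once the weakly singular structure produced by $\sigma>\frac n2$ is in place.
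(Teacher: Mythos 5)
Your approach works cleanly only in the regime $\gamma>\tfrac n2$, and you correctly flag that the stated lemma requires more; the problem is that your proposed rescue of the low-$\gamma$ case is itself broken. The ``weight-free bound'' you claim is ``harmless for every $\gamma\ge1$'' rests on estimating $\|a(s)u(s)\|_q$ with $\tfrac1q=\tfrac1\sigma+\tfrac1\gamma$ and then applying the $L^q\to L^\gamma$ smoothing of Lemma \ref{lemma2.1}. But $\tfrac1\sigma+\tfrac1\gamma>1$ whenever $\gamma<\sigma/(\sigma-1)$ --- in particular for $\gamma=1$, which is always in the admissible range --- so $q<1$, Lemma \ref{lemma2.1} does not apply, and $e^{t\triangle}$ is not even well defined on $L^q$ for $q<1$ (such functions need not be locally integrable). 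Thus the base case of your bootstrap collapses exactly in the regime it was designed to handle. The same low-integrability obstruction also invalidates the ``routine'' contraction in $X_T$: the Duhamel estimate there produces $\int_0^t(t-s)^{-n/(2\sigma)}s^{-n/(2\gamma)}\,ds$, which diverges for $\gamma\le\tfrac n2$. A genuine repair is possible (e.g.\ contract in a two-norm space $C([0,T];L^\gamma)\cap\{t^{\mu}\|u(t)\|_{\rho}<\infty\}$ with $\tfrac1\rho=\max\bigl(0,\tfrac1\gamma-\tfrac1\sigma\bigr)$, so that $au$ always lands in some $L^q$ with $q\ge1$), but that is a substantive reconstruction, not the argument you wrote.

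The paper circumvents this difficulty by a mechanism you did not consider: duality. It runs the contraction only for $u_0\in L^\infty$ (where $au\in L^\sigma$ causes no trouble), obtaining $\|u(t)\|_\infty\le Ce^{C\mathcal{L}^{\vartheta}t}\|u_0\|_\infty$ with $\vartheta$ arising from iterating over time intervals of length $\sim\mathcal{L}^{-\vartheta}$; it then gets the $L^1\to L^1$ bound by duality (via the adjoint equation, citing Daners), so that the product $au$ is never formed for rough $u$. The smoothing is produced not by singular Gronwall but by an energy estimate: multiplying by $u$, using H\"older, Gagliardo--Nirenberg and Young (this is where $\mathcal{L}^{\vartheta}$ enters in the paper, as the dual exponent in Young's inequality), and then invoking the Brezis--Cazenave differential inequality (Lemma \ref{lemma2.2}) to convert $g'+Ag^{1+2/n}\le Bg$ into the decay $t^{-n/4}$, i.e.\ the $L^1\to L^2$ bound; duality gives $L^2\to L^\infty$, composition gives $L^1\to L^\infty$, and a final Riesz--Thorin interpolation against the $L^\infty\to L^\infty$ bound yields \dref{2.3a} for every $\gamma\ge1$. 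So your identification of $\vartheta=\bigl(1-\tfrac{n}{2\sigma}\bigr)^{-1}$ via Henry's lemma is a valid alternative explanation in the easy regime, but the lemma's full strength for $1\le\gamma\le\tfrac n2$ requires either the duality--energy route of the paper or a considerably more careful two-exponent fixed-point scheme than the one you proposed.
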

An explicit proof of these theorems is not available in the literature.
 For the readability of this article, we put
 all proofs in Appendix A.

\section{A local interpolation inequality }
\ \ \ \
In this section, we will investigate a local interpolation inequality  for (\ref{1.1}). Throughout this section, we suppose that  $(A_1)$ and $(A_2)$ hold.
Let $\omega$ be an open subset of $\mathbb{R}^n$.
 Let $T>0$, and let $y_i\in L^\infty((0,T);L^\infty(\mathbb{R}^n))\cap C([0,T]; L^2(\mathbb{R}^n))$  be the solution  to equation (\ref{1.1}) corresponding to the initial value $y_i^0\in L^2(\mathbb{R}^n)\cap L^\infty(\mathbb{R}^n)$ $(i=1,2)$.
Write $\varphi=y_1-y_2$. Clearly, $\varphi$ solves the following equation:
 \begin{equation}\label{4.1}
   \left\{
\begin{array}{ll}
\partial_t\varphi-\triangle\varphi+F=0,& \textrm{ in } \mathbb{R}^n\times(0,T];\\
\varphi(x,0)=y^0_1(x)-y^0_2(x),    &   \textrm{ in } \mathbb{R}^n,
\end{array}
\right.
 \end{equation}
where   $F=f(y_1)-f(y_2)$. It follows from  $(A_2)$ that
 \begin{eqnarray}\label{4.2}
|F|\leq L_{M}|\varphi|,
 \end{eqnarray}
where $L_{M}$ is the Lipschitz constant of $f$ on the bounded domain $\mathcal{D}_{M}$ (see Theorem \ref{theorem1}).

We first make a localization process.
By  $(iii)$ of  $(A_1)$, there exist a  number $r>0$ such that for each $j\in \mathbb{N}^{+}$ there is an $x_j\in I_j$ with $B_{r}(x_j)\subset I_j\cap\omega$. \textbf{Let $R=\sqrt{n}L$ in the rest of this article, where $L$ is the side length of the cubes $I_j$.} Clearly, $I_j\subset B_{R}(x_j)$ for each $j\in\mathbb{N}^+$ by $(A_1)$.
We will establish a local interpolation inequality for the solution of (\ref{4.1}).
\begin{theorem}\label{interpolation}
There are two constants $C=C(L, r)>0$ and $\beta=\beta(L, r)\in(0,1)$ such that for each  $j\in \mathbb{N}^+$,
\begin{eqnarray}\label{r4.3}
\int_{I_j}|\varphi(x,T)|^2dx&\leq& e^{C(1+L_M^2)(\frac{1}{T}+T)}\big(\int_{B_{r}(x_j)}|\varphi(x,T)|^2dx\big)^{\beta}\cdot\mathbb{E}_j^{1-\beta},
 \end{eqnarray}
 where $\varphi$ is the solution of (\ref{4.1}), and
  \begin{eqnarray}\label{energy}
\mathbb{E}_j:=\int_{B_{5R}(x_j)}|\varphi(x,0)|^2dx+\int_0^T\int_{B_{5R}(x_j)}|\varphi(x,s)|^2dxds.
\end{eqnarray}
\end{theorem}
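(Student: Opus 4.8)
The plan is to prove \eqref{r4.3} by the frequency-function (logarithmic convexity) method of \cite{Phung}, applied locally around each center $x_j$. After a translation we may assume $x_j=0$, so that all balls are centered at the origin. The guiding idea is to follow a Gaussian-weighted $L^2$-mass of $\varphi$ as it is transported by the parabolic flow: concentrated at scale $r$ near time $T$ it is governed by the observation ball $B_r(x_j)$, while spread out over the parabolic history it is controlled by the energy $\mathbb{E}_j$ on $B_{5R}(x_j)$. The appearance of $R=\sqrt n L$ and $I_j\subset B_R$ lets us read off the intermediate-scale quantity $\int_{I_j}|\varphi(\cdot,T)|^2$.

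First I would localize. Fix a smooth cutoff $\zeta$ with $\zeta\equiv1$ on $B_{4R}$ and $\operatorname{supp}\zeta\subset B_{5R}$, and set $w=\zeta\varphi$. Using \eqref{4.1}, the function $w$ solves
\begin{equation*}
\partial_t w-\triangle w=-\zeta F-2\nabla\zeta\cdot\nabla\varphi-\varphi\,\triangle\zeta=:G_0,
\end{equation*}
where $|\zeta F|\le L_M|\varphi|$ by \eqref{4.2}, and the commutator terms $-2\nabla\zeta\cdot\nabla\varphi-\varphi\,\triangle\zeta$ are supported in the annulus $B_{5R}\setminus B_{4R}$, hence will be absorbed into the energy $\mathbb{E}_j$. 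This replaces the whole-space problem by one whose relevant mass lives in $B_{5R}$, at the price of a source term $G_0$. Next I would introduce, for a parameter $h>0$, the backward heat weight
\begin{equation*}
G_h(x,t)=\frac{1}{(T+h-t)^{n/2}}\exp\Big(-\frac{|x|^2}{4(T+h-t)}\Big),\qquad t\in[0,T],
\end{equation*}
together with
\begin{equation*}
\mathcal H(t)=\int_{\mathbb{R}^n}w^2G_h\,dx,\qquad
N(t)=\frac{(T+h-t)\int_{\mathbb{R}^n}|\nabla w|^2G_h\,dx}{\mathcal H(t)}.
\end{equation*}
The core computation is to differentiate $\log\mathcal H$ and $N$ in $t$. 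For the pure heat equation Almgren's algebraic identities make $N$ almost monotone and express the logarithmic derivative of $\mathcal H$ through $N$; the source $G_0$ perturbs these identities by terms that, estimating $|\zeta F|\le L_M|\varphi|$ by Cauchy--Schwarz against the weight, are bounded by $C(1+L_M^2)(N+1)$. This yields a differential inequality $N'(t)\le C(1+L_M^2)(N(t)+1)$, whence the approximate log-convexity of $\mathcal H$, the accumulated error after integrating the weight singularity over $(0,T)$ being exactly of the size $C(1+L_M^2)\big(\tfrac1T+T\big)$ recorded in \eqref{r4.3}.

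Finally I would convert the convexity estimate into \eqref{r4.3}. Choosing $h\sim r^2$ localizes $G_h(\cdot,T)$ to scale $r$, so $\mathcal H(T)$ is comparable to $\int_{B_r}|\varphi(\cdot,T)|^2$ up to exponentially small tails, while on $B_R$ the weight $G_h(\cdot,T)$ is bounded below, giving $\int_{I_j}|\varphi(\cdot,T)|^2\le C\int_{B_R}|\varphi(\cdot,T)|^2$. At the other end, $G_h(\cdot,0)$ is bounded on $B_{5R}$ and $w$ is supported there, so the initial mass and the annulus/commutator errors are controlled by $\int_{B_{5R}}|\varphi(\cdot,0)|^2+\int_0^T\!\int_{B_{5R}}|\varphi|^2=\mathbb{E}_j$; the two terms of $\mathbb{E}_j$ arise precisely from the initial datum and from integrating the source error in time. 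The log-convexity interpolation between these two endpoints then produces
\begin{equation*}
\int_{I_j}|\varphi(\cdot,T)|^2\,dx
\le e^{C(1+L_M^2)(\frac1T+T)}\Big(\int_{B_r(x_j)}|\varphi(\cdot,T)|^2\,dx\Big)^{\beta}\,\mathbb{E}_j^{\,1-\beta},
\end{equation*}
with $\beta\in(0,1)$ determined by the ratio of the radii $r$ and $R=\sqrt nL$, hence depending only on $L,r$.

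The main obstacle is the control of the source $G_0$ in the frequency computation: unlike the free heat equation, the potential part $\zeta F$ and the cutoff commutator destroy the exact monotonicity of $N$, and one must show that the resulting error terms are absorbable by $N$ itself so that the Gronwall step closes, while producing precisely the dependence $(1+L_M^2)(\tfrac1T+T)$ on the data. Tracking the weight singularity as $T+h-t\to0$ to obtain the $1/T$ factor, and handling the Gaussian tails outside $B_{5R}$ when passing from weighted integrals to integrals over genuine balls, are the two places where the estimates must be carried out with care.
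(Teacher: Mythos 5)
Your overall strategy---cutoff localization, a Gaussian-weighted frequency function, and Phung-type logarithmic convexity---is the same as the paper's, but the sketch breaks down at precisely the point you yourself flag as ``the main obstacle,'' and the fix you propose does not work. In the frequency-function differential inequality the source enters through the ratio
\begin{equation*}
\frac{\int_{B_{3R}(x_j)}|H_j|^2\,G_{h,j}\,dx}{\int_{B_{3R}(x_j)}|\phi_j|^2\,G_{h,j}\,dx},
\end{equation*}
where $H_j$ denotes your cutoff commutator $-2\nabla\zeta\cdot\nabla\varphi-\varphi\triangle\zeta$. The potential part is harmless, since $|\zeta F|\le L_M|\zeta\varphi|$ gives a ratio bounded by $L_M^2$; but for the commutator part, the Gaussian smallness on the annulus only controls the \emph{numerator}, while the \emph{denominator} has no a priori lower bound relative to $\mathbb{E}_j$ and can be arbitrarily small. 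Hence these terms are not ``bounded by $C(1+L_M^2)(N+1)$,'' they cannot be ``absorbed into the energy $\mathbb{E}_j$'' (they enter multiplicatively through the Gronwall/convexity step, not additively), and your differential inequality $N'\le C(1+L_M^2)(N+1)$ does not follow. The paper closes this step with the quantitative non-degeneracy estimate of Lemma \ref{theta} (imported from \cite{G. Zheng}): if $\int_{B_R(x_j)}|\varphi(T)|^2\neq0$, then $\mathbb{E}_j\le e^{L_2/\theta_j}\int_{B_{2R}(x_j)}|\varphi(t)|^2$ on a window $[T-\theta_j,T]$ with $1/\theta_j$ logarithmic in the ratio $\mathbb{E}_j/\int_{B_R(x_j)}|\varphi(T)|^2$; the whole frequency analysis is then restricted to $[T-\varepsilon_j,T]$, $\varepsilon_j=k\theta_j$, with $k$ chosen so that the Gaussian decay $e^{-L_6/(T-t+h)}$ beats $e^{L_2/\theta_j}$, which is what yields the uniform bound (\ref{r4.25}). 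Nothing in your sketch supplies this lower bound, and without it the argument does not close.

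The second gap is the parameter choice and the endpoint conversion. A fixed $h\sim r^2$ fails: the ``exponentially small tails'' at time $T$ are then $e^{-cR^2/r^2}$ times a quantity of size $\mathbb{E}_j$, i.e.\ a \emph{fixed constant} times $\mathbb{E}_j$, which can dominate $\int_{B_r(x_j)}|\varphi(T)|^2$ by an arbitrary factor and destroys the interpolation. In the paper, $h$ is chosen through (\ref{r4.29}) to depend on $\theta_j$, hence on the solution-dependent ratio $\mathbb{E}_j/\int_{B_R(x_j)}|\varphi(T)|^2$, exactly so that the tail term $e^{-r^2/(8h)}\mathbb{E}_j^{1+D_l}$ can be absorbed into half of the left-hand side. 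Relatedly, the convexity inequality of Lemma \ref{lemma2.5} controls the weighted mass at an \emph{intermediate} time $t_2=T-lh<T$, not at $T$; to produce $\int_{B_R(x_j)}|\varphi(T)|^2$ on the left of (\ref{r4.3}) one must propagate from $t_2$ back to $T$, which uses Lemma \ref{theta} a second time, and the final cancellation of all the $e^{C/\theta_j}$ factors (so that the constant is $j$-independent and $\beta$ depends only on $(L,r)$) is exactly the bookkeeping in (\ref{r4.35})--(\ref{r4.41}) that your sketch omits. Finally, the convexity is applied only on the tiny window near $T$, not over all of $(0,T)$; the factor $(1+L_M^2)(\frac1T+T)$ in (\ref{r4.3}) comes from the local energy estimates of Lemma \ref{lemma4.1} and the constant $\hat C$ in (\ref{constant}), not from ``integrating the weight singularity over $(0,T)$.''
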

In order to prove Theorem \ref{interpolation}, we need several preliminary results. We start with
 the following local energy estimates for the solution of (\ref{4.1}).
 \begin{lemma}\label{lemma4.1}
For each $j\in \mathbb{N}^+$ and $t\in(0,T]$,
  \begin{eqnarray}\label{r4.5}
\int_{B_{3R}(x_j)}|\varphi(x, t)|^2dx\leq L_1(1+L_M)\mathbb{E}_j,
 \end{eqnarray}
 and
  \begin{eqnarray}\label{r4.6}
\int_{B_{3R}(x_j)}|\nabla\varphi(x, t)|^2dx\leq L_1(1+L_M^2)(1+\frac{1}{t})\mathbb{E}_j,
 \end{eqnarray}
 where $L_1>1$ is a constant, which depends only on $R$.
 \end{lemma}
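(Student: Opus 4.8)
The plan is to prove both bounds by the classical Caccioppoli (localized energy) method, exploiting the fact that $\mathbb{E}_j$ already contains the full space--time $L^2$ mass of $\varphi$ on $B_{5R}(x_j)$, so that no Gronwall inequality is needed and the dependence on $L_M$ stays polynomial. Fix $j$ and suppress it. For the $L^2$ bound (\ref{r4.5}) I would choose a smooth cutoff $\eta$ with $\eta\equiv1$ on $B_{4R}(x_j)$, $\mathrm{supp}\,\eta\subset B_{5R}(x_j)$ and $|\nabla\eta|\le C/R$, multiply equation (\ref{4.1}) by $\eta^2\varphi$ and integrate over $\mathbb{R}^n$. Integration by parts turns the Laplacian into $\int\eta^2|\nabla\varphi|^2$ plus a cross term $2\int\eta\varphi\,\nabla\eta\cdot\nabla\varphi$, which I absorb by Young's inequality at the cost of $C\int|\nabla\eta|^2\varphi^2$; the source is controlled by $|F|\le L_M|\varphi|$ from (\ref{4.2}). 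This yields the differential inequality $\frac{d}{dt}\int\eta^2\varphi^2+\int\eta^2|\nabla\varphi|^2\le C(1+L_M)\int_{B_{5R}}\varphi^2$. Integrating in time from $0$ to $t$ and bounding $\int_0^t\!\int_{B_{5R}}\varphi^2\le\int_0^T\!\int_{B_{5R}}\varphi^2\le\mathbb{E}_j$ gives $\int_{B_{3R}}\varphi^2(t)\le\int\eta^2\varphi^2(t)\le L_1(1+L_M)\mathbb{E}_j$, which is (\ref{r4.5}); the retained gradient term additionally delivers the space--time bound $\int_0^T\!\int_{B_{4R}}|\nabla\varphi|^2\le C(1+L_M)\mathbb{E}_j$, which is the engine for the second estimate.

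For the gradient bound (\ref{r4.6}) I would take a second, nested cutoff $\zeta$ with $\zeta\equiv1$ on $B_{3R}(x_j)$, $\mathrm{supp}\,\zeta\subset B_{4R}(x_j)$, $|\nabla\zeta|\le C/R$, and compute $\frac{d}{dt}\int\zeta^2|\nabla\varphi|^2$. Substituting $\triangle\varphi=\partial_t\varphi+F$ after an integration by parts (equivalently, testing the equation against $\zeta^2\partial_t\varphi$) produces, after Young's inequality, the inequality $\frac{d}{dt}\int\zeta^2|\nabla\varphi|^2\le C\int|\nabla\zeta|^2|\nabla\varphi|^2+CL_M^2\int\zeta^2\varphi^2$. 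The crucial point is that the first term on the right lives on the annulus $B_{4R}\setminus B_{3R}$, where I have no pointwise-in-time control of $\nabla\varphi$; this is exactly why the nested cutoff is needed, so that after integrating in time this term is dominated by the space--time gradient bound $\int_0^T\!\int_{B_{4R}}|\nabla\varphi|^2\le C(1+L_M)\mathbb{E}_j$ obtained above.

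To manufacture the singular factor $1/t$, which reflects the parabolic smoothing that regularizes $L^2$ initial data into $H^1$ for positive times, I would introduce the time weight and estimate $\frac{d}{dt}\big(t\int\zeta^2|\nabla\varphi|^2\big)=\int\zeta^2|\nabla\varphi|^2+t\frac{d}{dt}\int\zeta^2|\nabla\varphi|^2$, integrate from $0$ to $t$, and then divide by $t$; the weight $s\le t$ in front of the right-hand terms is simply bounded by $t$, so the factor $t$ cancels after division and no spurious dependence on $T$ survives. Collecting the three resulting contributions and using $1+L_M\le 2(1+L_M^2)$ gives $\int_{B_{3R}}|\nabla\varphi(t)|^2\le L_1(1+L_M^2)(1+\tfrac1t)\mathbb{E}_j$, which is (\ref{r4.6}). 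The main obstacle is twofold: routing the annular gradient term through the previously established space--time estimate via the nested cutoffs, and installing the $t$-weight correctly so the blow-up is exactly $1/t$. A secondary, purely technical point is that the multiplier $\partial_t\varphi$ presupposes more regularity than $\varphi\in L^\infty((0,T);L^\infty)\cap C([0,T];L^2)$ supplies; I would legitimize it by noting that $\varphi$ solves the linear equation $\partial_t\varphi-\triangle\varphi+V\varphi=0$ with the bounded potential $V:=(f(y_1)-f(y_2))/(y_1-y_2)$, $|V|\le L_M$, so interior parabolic regularity makes $\varphi$ smooth in $B_{5R}(x_j)\times(0,T]$ and the energy identities rigorous (alternatively, one carries them out on Galerkin or mollified approximations and passes to the limit).
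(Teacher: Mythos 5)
Your proposal is correct and follows essentially the same route as the paper: a Caccioppoli estimate with a cutoff supported in $B_{5R}(x_j)$ tested against $\eta^2\varphi$ (whose retained space--time gradient term is precisely the paper's inequality (\ref{n7.4})), followed by a nested cutoff tested against the time-weighted multiplier $t\,\zeta^2\partial_t\varphi$, with the annular gradient term routed through the first step's space--time bound and the weight producing the $1/t$ factor exactly as in the paper. Your extra remark justifying the $\partial_t\varphi$ multiplier via interior parabolic regularity for the bounded-potential linear equation is a point the paper leaves implicit, but it does not change the argument.
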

 \begin{proof}
By the method of mollifier, we can find a sequence of smoothing functions $\{\sigma_j(\cdot)\}_{j\in \mathbb{N}^+}\subset C_0^\infty(\mathbb{R}^n)$  satisfying
\begin{eqnarray}\label{truncation}
 0\leq\sigma_j\leq1,\;\;\textrm{supp}\;\sigma_j\subset B_{5R}(x_j),\;\;\sigma_j=1  \;\;\mbox{on}\;\;  B_{4R}(x_j),
\end{eqnarray}
and there exists a positive constant $C=C(R)$ such that for each $j\in\mathbb{N}^+$ and $x\in\mathbb{R}^n$,
\begin{eqnarray}\label{sigma}
|\nabla\sigma_j(x)|\leq C(R).
\end{eqnarray}
Multiplying equation (\ref{4.1}) by $\sigma_j^2\varphi$ and
 integrating it over $\mathbb{R}^n$,  we obtain when $t\in(0,T]$,
\begin{eqnarray*}
\frac{1}{2}\frac{d}{dt}\int_{\mathbb{R}^n}\sigma_j^2|\varphi|^2dx+\int_{\mathbb{R}^n}\nabla\varphi\cdot\nabla(\sigma_j^2\varphi) dx=-\int_{\mathbb{R}^n}\sigma_j^2\varphi Fdx.
\end{eqnarray*}
By direct computations,
\begin{eqnarray*}
\frac{1}{2}\frac{d}{dt}\int_{\mathbb{R}^n}\sigma_j^2|\varphi|^2dx+\int_{\mathbb{R}^n}\sigma_j^2|\nabla\varphi|^2dx
=-2\int_{\mathbb{R}^n}\sigma_j\varphi\nabla\sigma_j\cdot\nabla\varphi dx-\int_{\mathbb{R}^n}\sigma_j^2\varphi Fdx.
\end{eqnarray*}
Applying (\ref{4.2}), (\ref{sigma}), and Cauchy-Schwarz inequality,  we have
\begin{eqnarray}\label{r7.3}
\frac{1}{2}\frac{d}{dt}\int_{\mathbb{R}^n}\sigma_j^2|\varphi|^2dx+\frac{1}{2}\int_{\mathbb{R}^n}\sigma_j^2|\nabla\varphi|^2dx
&\leq& 2\int_{\mathbb{R}^n}|\nabla\sigma_j|^2|\varphi|^2 dx+L_M\int_{\mathbb{R}^n}\sigma_j^2|\varphi|^2dx\nonumber \\
&\leq& C(R)\int_{B_{5R}(x_j)}|\varphi|^2 dx+L_M\int_{B_{5R}(x_j)}|\varphi|^2dx.
\end{eqnarray}
 Integrating (\ref{r7.3}) over $[0,t]$, we  have
\begin{eqnarray}\label{n7.4}
\int_{\mathbb{R}^n}\sigma_j^2|\varphi|^2dx+\int_{0}^{t}\int_{\mathbb{R}^n}\sigma_j^2|\nabla\varphi|^2dxds
\leq C(R)(1+L_M)\mathbb{E}_j,
\end{eqnarray}
from which leads to (\ref{r4.5}).

Then, we construct  another sequence of smoothing functions $\{\tilde{\sigma}_j(\cdot)\}_{j\in \mathbb{N}^+}\subset C_0^\infty(\mathbb{R}^n)$  satisfying
\begin{eqnarray}\label{r7.5}
 0\leq\tilde{\sigma}_j\leq1,\;\;\textrm{supp}\;\tilde{\sigma}_j\subset B_{4R}(x_j),\;  \;\tilde{\sigma}_j=1  \;\;\mbox{on}\;\;  B_{3R}(x_j),
\end{eqnarray}
and there exists a positive constant $C=C(R)$ such that for each $j\in\mathbb{N}^+$ and $x\in\mathbb{R}^n$,
\begin{eqnarray}\label{r7.6}
|\nabla\tilde{\sigma}_j(x)|\leq C(R).
\end{eqnarray}
Multiplying equation (\ref{4.1}) by $t\tilde{\sigma}_j^2\partial_t\varphi$, integrating it over $\mathbb{R}^n$,
and using the integration by parts formula, we obtain
\begin{eqnarray*}
t\int_{\mathbb{R}^n}\tilde{\sigma}_j^2|\partial_t\varphi|^2dx
+\frac{t}{2}\frac{d}{dt}\int_{\mathbb{R}^n}\tilde{\sigma}_j^2|\nabla\varphi|^2 dx
=t\int_{\mathbb{R}^n}2\tilde{\sigma}_j\partial_t\varphi\nabla\tilde{\sigma}_j\cdot\nabla\varphi dx-t\int_{\mathbb{R}^n}\tilde{\sigma}_j^2\partial_t\varphi F dx.
\end{eqnarray*}
It, together with Cauchy-Schwarz inequality, indicates
\begin{eqnarray*}
\frac{t}{2}\frac{d}{dt}\int_{\mathbb{R}^n}\tilde{\sigma}_j^2|\nabla\varphi|^2 dx
&\leq&2t\int_{\mathbb{R}^n}|\nabla\tilde{\sigma}_j|^2|\nabla\varphi|^2dx+\frac{t}{2}\int_{\mathbb{R}^n}\tilde{\sigma}_j^2|F|^2dx.
\end{eqnarray*}
Therefore,
\begin{eqnarray*}
\frac{1}{2}\frac{d}{dt}(t\int_{\mathbb{R}^n}\tilde{\sigma}_j^2|\nabla\varphi|^2dx)
&\leq&\frac{1}{2}\int_{\mathbb{R}^n}\tilde{\sigma}_j^2|\nabla\varphi|^2dx\nonumber \\
&+&C(R)t\int_{B_{4R}(x_j)}|\nabla\varphi|^2dx+\frac{L_M^2t}{2}\int_{B_{4R}(x_j)}|\varphi|^2dx.
\end{eqnarray*}
Integrating it over $[0,t]$, we have
\begin{eqnarray}\label{r7.7}
t\int_{\mathbb{R}^n}\tilde{\sigma}_j^2|\nabla\varphi|^2dx&\leq&(1+2C(R)t)\int_0^t\int_{B_{4R}(x_j)}|\nabla\varphi|^2dxds  \nonumber\\
&+&L_M^2t\int_0^t\int_{B_{4R}(x_j)}|\varphi|^2dxds.
\end{eqnarray}
Combining (\ref{r7.7}) with  (\ref{n7.4}) leads to
  \begin{eqnarray*}
\int_{\mathbb{R}^n}\tilde{\sigma}_j^2|\nabla\varphi|^2dx\leq C(R)(1+L_M^2)(1+\frac{1}{t})\mathbb{E}_j.
 \end{eqnarray*}
It, along with  (\ref{r7.5}), indicates (\ref{r4.6}).
Hence, we complete the proof.
\end{proof}

Then, we have
\begin{lemma}\label{theta}
If $\int_{B_{R}(x_j)}|\varphi(x,T)|^2dx\neq0$ ($j\in \mathbb{N}^+$), then,
 there are  positive constants $L_2, L_3$, $L_4$, and $L_5$ (depending only on $R$) such that
\begin{eqnarray}\label{n4.7}
\mathbb{E}_j\leq e^{\frac{L_2}{\theta_j}}\int_{B_{2R}(x_j)}|\varphi(x,t)|^2dx,\:\textrm{ for }\:  T-\theta_j\leq t\leq T,
\end{eqnarray}
where $\mathbb{E}_j$ is given in (\ref{energy}) and
\begin{eqnarray}\label{n4.8}
\frac{1}{\theta_j}:=L_3\ln\bigg( e^{L_4L_{M}T+L_5(1+\frac{1}{T})}\frac{\mathbb{E}_j}{\int_{B_{R}(x_j)}|\varphi(x,T)|^2dx}\bigg),
\end{eqnarray}
with $0<\theta_j<\min\{1,{T}/{2}\}$.
\end{lemma}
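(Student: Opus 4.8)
The plan is to run the frequency-function (logarithmic convexity) argument of \cite{Phung} for the parabolic operator, localized near $B_{2R}(x_j)$, and to read off (\ref{n4.7}) as a quantitative propagation-of-smallness estimate whose time window $\theta_j$ is dictated by the logarithm in (\ref{n4.8}). After translating so that $x_j=0$, I would fix $\delta>0$ of order $R^2$ and introduce the backward heat kernel
\[
G_\delta(x,t)=\frac{1}{(T-t+\delta)^{n/2}}\exp\Big(-\frac{|x|^2}{4(T-t+\delta)}\Big),
\]
which solves $\partial_tG_\delta+\triangle G_\delta=0$, together with a cut-off $\psi\in C_0^\infty(B_{3R}(x_j))$ with $\psi\equiv1$ on $B_{2R}(x_j)$. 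Writing $w=\psi\varphi$, the function $w$ solves a heat equation whose source $g=-\psi F-2\nabla\psi\cdot\nabla\varphi-\varphi\triangle\psi$ splits into a nonlinear part controlled by (\ref{4.2}) (so that $|\psi F|\le L_M|\varphi|$) and a commutator part supported in the annulus $B_{3R}(x_j)\setminus B_{2R}(x_j)$.

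Next I would set $H(t)=\int_{\mathbb{R}^n}w^2G_\delta\,dx$ and $D(t)=\int_{\mathbb{R}^n}|\nabla w|^2G_\delta\,dx$, and consider the frequency function $N(t)=(T-t+\delta)D(t)/H(t)$. Differentiating $H$ and using that $G_\delta$ solves the backward heat equation, the first-order terms in $\nabla G_\delta$ cancel and one is left with $H'(t)=-2D(t)+2\int_{\mathbb{R}^n}wgG_\delta\,dx$. The core of the proof is to differentiate $N$ and show that, up to the corrections produced by the source $g$, it obeys a one-sided differential inequality; this is exactly the logarithmic convexity property of \cite{Phung}. The nonlinear contribution is absorbed through (\ref{4.2}) and produces a factor growing like $e^{CL_MT}$, accounting for the term $L_4L_MT$ in (\ref{n4.8}); the commutator contribution, being supported in $B_{3R}(x_j)\setminus B_{2R}(x_j)$, is estimated by the local energy bounds (\ref{r4.5}) and (\ref{r4.6}) of Lemma \ref{lemma4.1}, whose factor $(1+\frac1t)$ is responsible for the term $L_5(1+\frac1T)$.

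Integrating the differential inequality for $N$ yields the logarithmic convexity of $H$ on $[T-\theta,T]$, and, combined with the a priori bound $H\le C\mathbb{E}_j$ that follows from (\ref{r4.5}), an interpolation inequality of the form $H(T)\le H(t)^{\mu}(C\mathbb{E}_j)^{1-\mu}$ with $\mu=\mu(\theta)\in(0,1)$. I would then convert weighted norms into the ball integrals appearing in the statement: since $\delta$ is of order $R^2$, the weight $G_\delta$ is bounded above and below by constants depending only on $R$ on $B_{2R}(x_j)$ throughout the window, so $H(T)$ is bounded below by a constant multiple of $\int_{B_R(x_j)}|\varphi(x,T)|^2dx$, while $H(t)$ is dominated by $\int_{B_{2R}(x_j)}|\varphi(x,t)|^2dx$ plus a tail on $B_{3R}(x_j)\setminus B_{2R}(x_j)$ that is controlled by $\mathbb{E}_j$ through (\ref{r4.5}). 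Substituting these comparisons and solving for $\int_{B_{2R}(x_j)}|\varphi(x,t)|^2dx$ produces a relation among $\mathbb{E}_j$, $\int_{B_{2R}(x_j)}|\varphi(x,t)|^2dx$ and $\int_{B_R(x_j)}|\varphi(x,T)|^2dx$ in which the exponent $\mu$ is governed by the window length $\theta$. Choosing $\theta=\theta_j$ precisely as in (\ref{n4.8}) makes the spurious powers of $\mathbb{E}_j$ and of $\int_{B_R(x_j)}|\varphi(x,T)|^2dx$ collapse and leaves (\ref{n4.7}); the hypothesis $\int_{B_R(x_j)}|\varphi(x,T)|^2dx\neq0$ guarantees the logarithm in (\ref{n4.8}) is well defined, the bound $\int_{B_R(x_j)}|\varphi(x,T)|^2dx\le L_1(1+L_M)\mathbb{E}_j$ keeps the argument of that logarithm bounded below so that (after possibly enlarging $L_5$) $1/\theta_j>0$, and shrinking the window if necessary yields $0<\theta_j<\min\{1,T/2\}$.

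The step I expect to be the main obstacle is the frequency differential inequality itself: establishing the monotonicity of $N$ with explicit, trackable constants when both the semilinear term and the localizing commutator are present, and in particular producing the exact combination $e^{L_4L_MT+L_5(1+\frac1T)}$ requires feeding the energy estimates (\ref{r4.5}) and (\ref{r4.6}) into the correction terms carefully. Once the frequency inequality is secured, the passage to ball integrals and the calibration of $\theta_j$ are routine.
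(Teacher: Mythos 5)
The paper itself does not prove Lemma \ref{theta}: Remark \ref{remark4.1} delegates the proof to \cite{G. Zheng}. So your attempt has to be judged on its own merits, against the machinery that such proofs require (which is visible in how the paper uses this lemma inside the proof of Theorem \ref{interpolation}), and it has a genuine gap at its core. You propose to control the correction terms in the differential inequalities for $H$ and $N$ by "absorbing" the commutator through the local energy bounds (\ref{r4.5})--(\ref{r4.6}). But those bounds control only the \emph{numerator}, $\int |g|^2 G_\delta\,dx \le C(1+L_M^2)(1+\frac1t)\mathbb{E}_j$; what the frequency-function inequalities require is a bound on the \emph{ratio} $\int |g|^2G_\delta\,dx\big/\int w^2G_\delta\,dx$, and the only available lower bound on the denominator is $c\,(T-t+\delta)^{-n/2}e^{-R^2/(T-t+\delta)}\int_{B_{2R}(x_j)}|\varphi(x,t)|^2dx$ --- that is, precisely the conclusion (\ref{n4.7}) you are trying to prove. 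This circularity is the whole difficulty of the lemma. It is broken in the literature by a bootstrap/continuity argument in $t$: one assumes (\ref{n4.7}) holds down to some $t_*$, uses that hypothesis to validate the differential inequalities there (paying a factor $e^{L_2/\theta_j}$), integrates them to recover (\ref{n4.7}) with room to spare, and extends past $t_*$. Indeed this is exactly how the paper itself validates the same ratio bound in Step 2 of the proof of Theorem \ref{interpolation}, see (\ref{r4.22})--(\ref{r4.25}), where Lemma \ref{theta} supplies the denominator bound. Your sketch never confronts this step; it is the step that fails as written.

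Even if you added the bootstrap, two of your concrete choices prevent it from closing. First, you fix $\delta\sim R^2$; then $G_\delta$ is comparable to an $R$-dependent constant on all of $B_{3R}(x_j)$ throughout the window, so the Gaussian separation between the commutator's support and $B_{2R}(x_j)$ is a fixed constant and can never absorb the loss $e^{L_2/\theta_j}$, which blows up as $\int_{B_R(x_j)}|\varphi(x,T)|^2dx/\mathbb{E}_j\to0$; for the same reason the tail term $e^{-R^2/(T-t+\delta)}\mathbb{E}_j$ in your weight-conversion step is comparable to $\mathbb{E}_j$ rather than exponentially small, so it cannot be dominated by the main term. The Gaussian scale must be tied to the window, $h\lesssim\theta_j$, exactly as in the paper's (\ref{r4.23}) and (\ref{re1}), where $k=\min\{L_6/(2L_2),\frac12\}$ is chosen precisely so that $e^{-L_6/(T-t+h)}$ beats $e^{L_2/\theta_j}$. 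Second, your cutoff is $\equiv1$ on exactly $B_{2R}(x_j)$, so the commutator lives in $B_{3R}(x_j)\setminus B_{2R}(x_j)$, whose largest weight $e^{-(2R)^2/(4(T-t+h))}$ coincides with the smallest weight on the denominator ball $B_{2R}(x_j)$: the net separation is zero, and no choice of $h$ helps. This is why the paper's $\eta_j$ in (\ref{r4.11}) equals $1$ on the strictly larger ball $B_{5R/2}(x_j)$, creating the gap $L_6=9R^2/16$. A smaller point: three-point logarithmic convexity bounds an interior time by the two endpoint times, so it cannot output your inequality $H(T)\le H(t)^{\mu}(C\mathbb{E}_j)^{1-\mu}$ with $T$ an endpoint; what the argument actually runs on is the almost-monotonicity $H(T)\le e^{C\theta}H(t)$ coming from $N\ge0$, and the calibration of $\theta_j$ in (\ref{n4.8}) should be carried out from that inequality.
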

\begin{remark}\label{remark4.1}
Inequality (\ref{n4.8}) shows  that if $\int_{B_{R}(x_j)}|\varphi(x,T)|^2dx\neq0$, then $\int_{B_{2R}(x_j)}|\varphi(x,t)|^2dx\neq0$, when $T-\theta_j\leq t\leq T$. The proof of this lemma can be found in \cite{G. Zheng} and we omit the details.
\end{remark}
Next, we will introduce the definition of the frequency function. To this end, we first construct a sequence of smoothing functions $\{\eta_j(\cdot)\}_{j\in \mathbb{N}^+}\subset C_0^\infty(\mathbb{R}^n)$ satisfying
\begin{eqnarray}\label{r4.11}
 0\leq\eta_j\leq1, \: \textrm{supp}\; \eta_j\subset  B_{3R}(x_j), \;\; \eta_j=1  \textrm{ on }  B_{5R/2}(x_j),
\end{eqnarray}
and there exits a positive number $C=C(R)>0$ such that for any $x\in \mathbb{R}^n$ and $j\in \mathbb{N}^+$,
\begin{eqnarray}\label{r4.12}
 |\nabla\eta_j(x)|\leq C(R).
 \end{eqnarray}
Define $\phi_j(x,t)=\eta_j(x)\varphi(x,t)$.
 For each  $h>0$, we define functions
\begin{eqnarray}\label{G}
G_{h,j}(x,t):=\frac{1}{(T-t+h)^{n/2}}e^{-\frac{|x-x_j|^2}{4(T-t+h)}},\:\: (x,t)\in \mathbb{R}^n\times[0,T]
\end{eqnarray}
and
\begin{eqnarray}\label{N}
N_{h,j}(t):=\frac{\int_{B_{3R}(x_j)}|\nabla \phi_j|^2(x,t)\cdot G_{h,j}(x,t)dx}{\int_{B_{3R}(x_j)}|\phi_j|^2(x,t)\cdot G_{h,j}(x,t)dx},
\end{eqnarray}
when $t\in (0,T]$ and $\int_{B_{3R}(x_j)}|\phi_j|^2(x,t)\cdot G_{h,j}(x,t)dx\neq0$.
\begin{remark}
We call the function $N_{h,j}(\cdot)$ the local
{\it  frequency function}, and  it  has the following properties.
\end{remark}

\begin{lemma}\label{lemma2.4}
$(i)$ If $t\in(0,T]$,  $h>0$, and $\int_{B_{3R}(x_j)}|\phi_j|^2(x,t)\cdot G_{h,j}(x,t)dx\neq0$, then
\begin{eqnarray*}\label{4.3}
&&\frac{1}{2}\frac{d}{dt}\int_{ B_{3R}(x_j)}|\phi_j|^2(x,t)\cdot G_{h,j}(x,t)dx+N_{h,j}(t)\int_{B_{3R}(x_j)}|\phi_j|^2(x,t)\cdot G_{h,j}(x,t)dx\nonumber \\
&=&\int_{B_{3R}(x_j)} \!\phi_j\cdot(\partial_t \phi_j-\triangle \phi_j)(x,t)\cdot G_{h,j}(x,t)dx.
\end{eqnarray*}
$(ii)$
If $t\in(0,T]$,  $h>0$, and $\int_{B_{3R}(x_j)}|\phi_j|^2(x,t)\cdot G_{h,j}(x,t)dx\neq0$, then
\begin{eqnarray*}\label{4.4}
N_{h,j}' (t)\leq\frac{1}{T-t+h}N_{h,j}(t)+\frac{\int_{B_{3R}(x_j)}(\partial_t \phi_j-\triangle \phi_j)^2(x,t)\cdot G_{h,j}(x,t)dx}{\int_{B_{3R}(x_j)}|\phi_j|^2(x,t)\cdot G_{h,j}(x,t)dx}.
\end{eqnarray*}
\end{lemma}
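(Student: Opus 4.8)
The plan is to base everything on the single fact that the weight $G_{h,j}$ is backward-caloric. A direct differentiation gives $\partial_t G_{h,j}+\triangle G_{h,j}=0$ together with $\nabla G_{h,j}=-\frac{x-x_j}{2(T-t+h)}G_{h,j}$ on $\mathbb{R}^n\times[0,T]$. Since $\eta_j\in C_0^\infty$ has compact support strictly inside $B_{3R}(x_j)$, the function $\phi_j=\eta_j\varphi$ and all its spatial derivatives vanish near $\partial B_{3R}(x_j)$, so every integration by parts below is free of boundary terms and the integrals over $B_{3R}(x_j)$ may be read as integrals over $\mathbb{R}^n$. For (i), I would differentiate $H_{h,j}(t):=\int_{B_{3R}(x_j)}|\phi_j|^2G_{h,j}\,dx$ under the integral sign, replace $\partial_t G_{h,j}$ by $-\triangle G_{h,j}$, and integrate by parts twice; combined with the identity $\int\phi_j\triangle\phi_j\,G_{h,j}=-\int|\nabla\phi_j|^2G_{h,j}-\int\phi_j\nabla\phi_j\cdot\nabla G_{h,j}$ this yields $\frac{1}{2} H_{h,j}'(t)+\int|\nabla\phi_j|^2G_{h,j}\,dx=\int\phi_j(\partial_t\phi_j-\triangle\phi_j)G_{h,j}\,dx$, which is exactly (i) once we recall $N_{h,j}H_{h,j}=\int|\nabla\phi_j|^2G_{h,j}$.

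For (ii), set $D_{h,j}(t):=\int_{B_{3R}(x_j)}|\nabla\phi_j|^2G_{h,j}\,dx$ and $\mathcal{R}_j:=\partial_t\phi_j-\triangle\phi_j$, so that $N_{h,j}=D_{h,j}/H_{h,j}$ and, by (i), $H_{h,j}'=2\int\phi_j\mathcal{R}_jG_{h,j}-2D_{h,j}$. Differentiating the quotient gives $N_{h,j}'=D_{h,j}'/H_{h,j}-N_{h,j}H_{h,j}'/H_{h,j}$, so the whole matter reduces to computing $D_{h,j}'$. I would differentiate $D_{h,j}$ under the integral, again use $\partial_t G_{h,j}=-\triangle G_{h,j}$, substitute $\partial_t\phi_j=\triangle\phi_j+\mathcal{R}_j$, and convert the resulting Hessian terms by a Rellich--Pohozaev integration by parts using the explicit forms of $\nabla G_{h,j}$ and $\triangle G_{h,j}$. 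This expresses $D_{h,j}'$ as the sum of the good negative term $-2\int(\triangle\phi_j)^2G_{h,j}$, the two source terms $-2\int\mathcal{R}_j\triangle\phi_j\,G_{h,j}$ and $\frac{1}{T-t+h}\int\mathcal{R}_j((x-x_j)\cdot\nabla\phi_j)G_{h,j}$, the dangerous factor $\frac{n-1}{T-t+h}D_{h,j}$, and the manifestly nonpositive geometric remainder $\frac{1}{2(T-t+h)^2}\int\big(((x-x_j)\cdot\nabla\phi_j)^2-|x-x_j|^2|\nabla\phi_j|^2\big)G_{h,j}\,dx$.

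The decisive step is a completion of squares. Collecting all terms linear in $\mathcal{R}_j$ (those in $D_{h,j}'$ and the one in $N_{h,j}H_{h,j}'$), I would write them, together with the quadratic $-\int\mathcal{R}_j^2G_{h,j}$ supplied by the target, as $-\int(\mathcal{R}_j+Q)^2G_{h,j}+\int Q^2G_{h,j}$ with $Q:=\triangle\phi_j-\frac{1}{2(T-t+h)}((x-x_j)\cdot\nabla\phi_j)+N_{h,j}\phi_j$; the square $-\int(\mathcal{R}_j+Q)^2G_{h,j}\le0$ is discarded and is precisely what produces the term $\int\mathcal{R}_j^2G_{h,j}/H_{h,j}$ in the statement. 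After expanding $\int Q^2G_{h,j}$ and, crucially, rewriting $\int(\triangle\phi_j)^2G_{h,j}$ through the weighted Laplacian $\mathcal{L}_{h,j}:=\triangle-\frac{x-x_j}{2(T-t+h)}\cdot\nabla$ (which is self-adjoint for $G_{h,j}\,dx$), all the $\frac{1}{T-t+h}D_{h,j}$ contributions and all geometric remainders cancel identically, leaving
\[
N_{h,j}'(t)-\frac{1}{T-t+h}N_{h,j}(t)-\frac{\int\mathcal{R}_j^2G_{h,j}\,dx}{\int|\phi_j|^2G_{h,j}\,dx}\le\frac{1}{H_{h,j}}\Big(N_{h,j}^2H_{h,j}-\int(\mathcal{L}_{h,j}\phi_j)^2G_{h,j}\,dx\Big).
\]
The argument then closes by Cauchy--Schwarz in $L^2(G_{h,j}\,dx)$: since $D_{h,j}=\int|\nabla\phi_j|^2G_{h,j}=-\int(\mathcal{L}_{h,j}\phi_j)\phi_j\,G_{h,j}$, we get $D_{h,j}^2\le\big(\int(\mathcal{L}_{h,j}\phi_j)^2G_{h,j}\big)\,H_{h,j}$, i.e. $N_{h,j}^2H_{h,j}\le\int(\mathcal{L}_{h,j}\phi_j)^2G_{h,j}$, so the right-hand side above is $\le0$ and (ii) follows.

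I expect the main obstacle to be exactly the bookkeeping for $D_{h,j}'$ and the matching completion of squares: the Rellich--Pohozaev manipulation throws up several $\frac{1}{T-t+h}$ and $\frac{1}{(T-t+h)^2}$ terms, in particular the coefficient $n-1$ instead of the desired $1$, which look fatal. The estimate survives only because the natural second-order energy here is $\int(\mathcal{L}_{h,j}\phi_j)^2G_{h,j}$ rather than $\int(\triangle\phi_j)^2G_{h,j}$; it is this substitution that forces the cancellation of every $\frac{1}{T-t+h}$ term and the geometric remainders and makes the clean Cauchy--Schwarz bound available.
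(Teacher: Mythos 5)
Your proof is correct, but there is an important structural point: the paper does not actually prove this lemma — its entire ``proof'' is the sentence ``The proof can be found in \cite{PW} and we omit the details.'' So what you have produced is not an alternative to an argument in the paper, but a correct, self-contained reconstruction of the standard frequency-function monotonicity argument (Poon, Phung--Wang) that the citation points to. Your bookkeeping checks out at every stage I verified: part (i) follows exactly as you say from $\partial_t G_{h,j}+\triangle G_{h,j}=0$ and two integrations by parts (no boundary terms, since $\mathrm{supp}\,\phi_j\subset B_{3R}(x_j)$); for part (ii), the naive differentiation of $D_{h,j}=\int|\nabla\phi_j|^2G_{h,j}\,dx$ does produce precisely the terms you list, including the coefficient $\frac{n-1}{T-t+h}D_{h,j}$ and the nonpositive remainder $\frac{1}{2(T-t+h)^2}\int\big(((x-x_j)\cdot\nabla\phi_j)^2-|x-x_j|^2|\nabla\phi_j|^2\big)G_{h,j}\,dx$, and rewriting through the drift operator $\mathcal{L}_{h,j}=\triangle-\frac{x-x_j}{2(T-t+h)}\cdot\nabla$ collapses all of this to the clean identity
\begin{equation*}
D_{h,j}'=\frac{1}{T-t+h}D_{h,j}-2\int(\mathcal{L}_{h,j}\phi_j)^2G_{h,j}\,dx-2\int \mathcal{R}_j\,\mathcal{L}_{h,j}\phi_j\,G_{h,j}\,dx,
\end{equation*}
after which your completion of squares with $Q=\mathcal{L}_{h,j}\phi_j+N_{h,j}\phi_j$ and the Cauchy--Schwarz bound $N_{h,j}^2H_{h,j}\le\int(\mathcal{L}_{h,j}\phi_j)^2G_{h,j}\,dx$ (valid because $D_{h,j}=-\int\phi_j\,\mathcal{L}_{h,j}\phi_j\,G_{h,j}\,dx$) yield exactly the stated differential inequality. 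Your closing remark is also the right diagnosis: the argument survives only because the correct second-order quantity is $\int(\mathcal{L}_{h,j}\phi_j)^2G_{h,j}$, not $\int(\triangle\phi_j)^2G_{h,j}$.

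The only gap worth flagging is regularity: differentiating under the integral sign and the repeated integrations by parts require that $\phi_j(\cdot,t)\in H^2$ and $t\mapsto\phi_j(\cdot,t)$ be differentiable in $L^2$, which is not automatic from $\varphi\in L^\infty((0,T);L^\infty)\cap C([0,T];L^2)$. This follows from interior parabolic regularity, since $\partial_t\varphi-\triangle\varphi=-F$ with $F\in L^\infty((0,T);L^2(\mathbb{R}^n))$ gives $\varphi\in L^2_{loc}((0,T];H^2_{loc})$ with $\partial_t\varphi\in L^2_{loc}((0,T];L^2_{loc})$, so the identities hold for a.e.\ $t$ and the conclusion follows; a complete write-up should say this, though the paper (and the cited reference) gloss over it in the same way.
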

\begin{proof}
The proof can be found in \cite{PW} and we omit the details.
\end{proof}
 Next, we will introduce an interpolation inequality.
\begin{lemma}\label{lemma2.5}
Let $h>0$, let $0<S<T$, and let $g(\cdot), N(\cdot)\in C^1[S,T]$ be two positive functions satisfying
  \begin{eqnarray*}
   \left\{
\begin{array}{ll}
\big|\frac{1}{2}g'(t)+N(t)g(t)\big|\leq \tilde{C}g(t), \\
N'(t)\leq\frac{1}{ T-t+h}N_{\lambda}(t)+\bar{C},
\end{array}
\right.
 \end{eqnarray*}
 where $\tilde{C}$ and $\bar{C}$ are two positive constants. Then for $S\leq t_1<t_2<t_3\leq T$, we have
 \begin{eqnarray}\label{logcon}
 g(t_2)^{1+D}\leq g(t_3)\cdot g(t_1)^{D}\cdot e^{\mathcal{K}},
 \end{eqnarray}
where
 \begin{eqnarray*}
 D=\frac{\int_{t_2}^{t_3}\frac{dt}{T-t+h}}{\int_{t_1}^{t_2}\frac{dt}{T-t+h}},
 \end{eqnarray*}
 and
  \begin{eqnarray*}
  \mathcal{K}=2D(t_2-t_1)\big[\tilde{C}+\bar{C}(t_2-t_1)\big]+2(t_3-t_2)\big[\tilde{C}+\bar{C}\int_{t_2}^{t_3}\frac{T-t_2+h}{T-t+h}dt\big].
 \end{eqnarray*}
\end{lemma}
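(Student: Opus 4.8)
The plan is to reduce (\ref{logcon}) to a logarithmic convexity inequality for $\ln g$, using the first hypothesis to control $(\ln g)'$ through the frequency $N$ and the second hypothesis to force the weighted frequency $(T-t+h)N(t)$ to be almost monotone. I would first set $y(t)=\ln g(t)$, which is legitimate since $g$ is positive and $C^1$. Dividing $\big|\frac12 g'+Ng\big|\leq\tilde C g$ by $g>0$ gives $\big|\frac12 y'(t)+N(t)\big|\leq\tilde C$, so $y'(t)=-2N(t)+2\rho(t)$ with $|\rho(t)|\leq\tilde C$. Integrating over $[t_1,t_2]$ and $[t_2,t_3]$ then expresses the increments $y(t_1)-y(t_2)$ and $y(t_2)-y(t_3)$ as $2\int N\,dt$ plus errors bounded by $2\tilde C$ times the interval length. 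Since the claimed inequality is, after taking logarithms, exactly $y(t_2)-y(t_3)\leq D\big(y(t_1)-y(t_2)\big)+\mathcal K$, it suffices to compare $\int_{t_2}^{t_3}N\,dt$ with $D\int_{t_1}^{t_2}N\,dt$.

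Next I would extract the monotonicity encoded in the second hypothesis (reading $N_\lambda$ as $N$). Writing $M(t)=(T-t+h)N(t)$ and using $N'(t)\leq\frac{1}{T-t+h}N(t)+\bar C$, a one-line computation gives $M'(t)\leq(T-t+h)\bar C$, i.e. $M(t)-\bar C\Phi(t)$ is non-increasing, where $\Phi(t)=\int_S^t(T-s+h)\,ds$. This is the crucial structural fact: $(T-t+h)N(t)$ decreases up to the explicit correction $\bar C\Phi$.

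The core comparison then proceeds by writing $N(t)=M(t)/(T-t+h)$ and using the monotonicity of $M-\bar C\Phi$ relative to the point $t_2$, namely $M(t)\leq M(t_2)+\bar C\int_{t_2}^t(T-s+h)\,ds$ for $t\geq t_2$ and $M(t)\geq M(t_2)-\bar C\int_t^{t_2}(T-s+h)\,ds$ for $t\leq t_2$. Integrating against the weight $\frac{1}{T-t+h}$ yields $\int_{t_2}^{t_3}N\,dt\leq M(t_2)I_{23}+E_1$ and $\int_{t_1}^{t_2}N\,dt\geq M(t_2)I_{12}-E_2$, where $I_{12}=\int_{t_1}^{t_2}\frac{dt}{T-t+h}$, $I_{23}=\int_{t_2}^{t_3}\frac{dt}{T-t+h}$, and $E_1,E_2\geq0$ are the double integrals of the correction. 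Eliminating the common unknown $M(t_2)$ and invoking $D=I_{23}/I_{12}$ gives $\int_{t_2}^{t_3}N\,dt\leq D\int_{t_1}^{t_2}N\,dt+DE_2+E_1$.

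Finally I would assemble everything: substitute the two integral identities for the increments of $y$, absorb the $\rho$-terms through $|\rho|\leq\tilde C$, and bound the geometric errors by the elementary estimates $\frac{1}{T-t+h}\int_t^{t_2}(T-s+h)\,ds\leq t_2-t$ for $E_2$ and $\frac{1}{T-t+h}\int_{t_2}^t(T-s+h)\,ds\leq(t-t_2)\frac{T-t_2+h}{T-t+h}$ for $E_1$. Collecting the resulting four terms reproduces exactly $\mathcal K$, and exponentiating gives (\ref{logcon}). The main obstacle is purely the bookkeeping: the analytic content (monotonicity of $(T-t+h)N-\bar C\Phi$) is short, but one must keep the two error integrals and the two $\rho$-contributions in the correct slots, with the right powers of $D$, so that their crude-but-sharp bounds recombine into the stated constant, especially the asymmetric treatment near $t_2$ that produces the weight $\frac{T-t_2+h}{T-t+h}$ in the last term.
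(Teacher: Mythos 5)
Your proof is correct, and it is essentially the standard argument: the paper itself omits the proof of Lemma \ref{lemma2.5}, deferring to the cited reference of Phung, and the argument given there rests on exactly the two ingredients you use, namely that $\big|\tfrac{1}{2}(\ln g)'+N\big|\leq\tilde C$ and that $M(t)=(T-t+h)N(t)$ satisfies $M'(t)\leq \bar C(T-t+h)$, followed by integration of the logarithmic derivative over $[t_1,t_2]$ and $[t_2,t_3]$ and elimination of $M(t_2)$. Your reading of the statement's typo $N_\lambda$ as $N$ (and $\lambda$ as $h$) is the intended one, and your error bounds $\frac{1}{T-t+h}\int_t^{t_2}(T-s+h)\,ds\leq t_2-t$ and $\frac{1}{T-t+h}\int_{t_2}^{t}(T-s+h)\,ds\leq (t-t_2)\frac{T-t_2+h}{T-t+h}$ reproduce the stated constant $\mathcal{K}$ exactly, so nothing is missing.
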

\begin{remark}
This inequality is borrowed from \cite{Phung}, and we also omit the details.
Here, we call it the logarithmic convexity property of the function $g(\cdot)$.
\end{remark}

Now, we are in the position to prove Theorem \ref{interpolation}.
\begin{proof}
 Without loss of generality,
we assume that  $\int_{I_j}|\varphi(x,T)|^2dx\neq0$ ($j\in \mathbb{N}^+$).
We recall $R=\sqrt{n}L$, and $I_j\subset B_{R}(x_j)$  for each $j\in\mathbb{N}^+$.
Thus,
\begin{eqnarray}\label{r4.9}
\int_{B_{R}(x_j)}|\varphi(x,T)|^2dx\neq0.
\end{eqnarray}
Now,
we are going to prove the following inequalities:
there exist two constants $C=C(R, r)>0$ and $\beta=\beta(R, r)\in(0,1)$ such that for each  $j\in \mathbb{N}^+$,
\begin{eqnarray}\label{r4.10}
\int_{B_{R}(x_j)}|\varphi(x,T)|^2dx&\leq& e^{C(1+L_M^2)(\frac{1}{T}+T)}\big(\int_{B_{r}(x_j)}|\varphi(x,T)|^2dx\big)^{\beta}\cdot\mathbb{E}_j^{1-\beta}.
 \end{eqnarray}
When it is proved, we finish the proof of this theorem.
The proof of (\ref{r4.10}) will be split into three steps.\

\noindent{\it Step 1. We study the properties of the frequency function $N_{h,j}(t)$.}\

We recall that the functions $\phi_j(x,t)=\eta_j(x)\varphi(x,t)$, where $\eta_j(x)$ is provided in (\ref{r4.11}).
Using (\ref{r4.9}) and Remark \ref{remark4.1} , we have $\int_{B_{3R}(x_j)}|\phi_j(x,t)|^2\cdot G_{h,j}(x,t)dx\neq0$, when $t\in[T-\theta_j,T]$  (where $\theta_j$ is the positive number given in Lemma \ref{theta}).
Therefore, the frequency function $N_{h,j}(t)$ given in (\ref{N}) is well-defined,  when $t\in[T-\theta_j,T]$.
After some computations,
 \begin{equation}\label{r4.15}
\partial_t\phi_j-\triangle\phi_j=H_j-\eta_j F,\:\: \textrm{ in } \mathbb{R}^n\times(0,T],
 \end{equation}
where $H_j=-2\nabla\varphi\cdot\nabla\eta_j-\varphi\triangle\eta_j$.
This, along with $(i)$ of Lemma \ref{lemma2.4}, indicates
\begin{eqnarray*}
&&\frac{1}{2}\frac{d}{dt}\int_{B_{3R}(x_j)}|\phi_j|^2\cdot G_{h,j} dx+N_{h,j}(t)\int_{B_{3R}(x_j)}|\phi_j|^2\cdot G_{h,j}dx\\
&=&\int_{B_{3R}(x_j)} \phi_j(H_j-\eta_j F)\cdot G_{h,j}dx.
\end{eqnarray*}
It, together with Cauchy-Schwarz inequality,  indicates when $t\in[T-\theta_j,T]$
\begin{eqnarray}\label{r4.16}
&&|\frac{1}{2}\frac{d}{dt}\int_{B_{3R}(x_j)}|\phi_j|^2\cdot G_{h,j} dx+N_{h,j}(t)\int_{B_{3R}(x_j)}|\phi_j|^2\cdot G_{h,j} dx|\nonumber \\
&\leq&\frac{1}{2}\int_{B_{3R}(x_j)}\big(|H_j|^2+\eta_j^2|F|^2\big)G_{h,j} dx+\int_{B_{3R}(x_j)} |\phi_j|^2\cdot G_{h,j}dx\\
&=&\big(\frac{1}{2}\frac{\int_{B_{3R}(x_j)}\big(|H_j|^2+\eta_j^2|F|^2\big)G_{h,j} dx}{\int_{B_{3R}(x_j)} |\phi_j|^2\cdot G_{h,j}dx}+1\big)\int_{B_{3R}(x_j)} |\phi_j|^2\cdot G_{h,j}dx.\nonumber
\end{eqnarray}
Combining $(ii)$ of Lemma \ref{lemma2.4}, (\ref{r4.15}), and Cauchy-Schwarz inequality leads to
when $t\in[T-\theta_j,T]$
\begin{eqnarray}\label{r4.17}
\frac{dN_{h,j} }{dt}(t)\leq{1\over T-t+h}N_{h,j}(t)+\frac{2\int_{B_{3R}(x_j)}\big(|H_j|^2+\eta_j^2|F|^2\big)G_{h,j}dx}{\int_{B_{3R}(x_j)}|\phi_j|^2G_{h,j}dx}.
\end{eqnarray}

\noindent{\it Step 2.We will estimate $\frac{\int_{B_{3R}(x_j)}(|H_j|^2+\eta_j^2|F|^2)G_{h,j}dx}{\int_{B_{3R}(x_j)}|\phi_j|^2\cdot G_{h,j} dx}$, when $t\in[T-\varepsilon_j,T]\subset[T-\theta_j,T]$,
where the parameter $\varepsilon_j\in(0,\theta_j)$ ($j\in \mathbb{N}^+$) will be given later.}\

Clearly, we have
\begin{eqnarray}\label{r4.19}
H_j(x,t)=0, \: \textrm{for} \:x\in B_{5R/2}(x_j), \: t\in[0, T].
\end{eqnarray}
This  indicates
\begin{eqnarray*}
\frac{\int_{B_{3R}(x_j)} |H_j|^2\cdot G_{h,j} dx}{\int_{B_{3R}(x_j)}|\phi_j|^2\cdot G_{h,j}dx}&\leq & \frac{\int_{B_{3R}(x_j)\setminus B_{5R/2}(x_j)}|H_j|^2\cdot G_{h,j}dx}{\int_{B_{2R}(x_j)}|\phi_j|^2\cdot G_{h,j} dx}\\
&\leq& \frac{\int_{B_{3R}(x_j)\setminus B_{5R/2}(x_j)}|H_j|^2dx}{\int_{B_{2R}(x_j)}|\phi_j|^2dx}e^{-\frac{L_6}{T-t+h}},
\:t\in[T-\theta_j,T],
\end{eqnarray*}
where $L_6:=\frac{9R^2}{16}$.
Using  (\ref{r4.12}) and Cauchy-Schwarz inequality,
\begin{eqnarray}\label{r4.20}
\int_{B_{3R}(x_j)} |H_j(x,t)|^2dx
&\leq&\int_{B_{3R}(x_j)} (|\nabla\varphi|^2+|\varphi|^2)(x,t)\cdot(4|\nabla\eta_j|^2+|\triangle\eta_j|^2)(x,t)dx \\
&\leq& C_1\int_{B_{3R}(x_j)} (|\nabla\varphi|^2+|\varphi|^2)(x,t)dx,\nonumber
\end{eqnarray}
where $C_1$ is a positive number only depending on $R$.
This, along with  (\ref{r4.5}) and  (\ref{r4.6}), implies that
\begin{eqnarray}\label{r4.21}
&&\int_{B_{3R}(x_j)} |H_j(x,t)|^2dx\leq C_1L_1(1+L_M^2)(1+\frac{1}{t})\mathbb{E}_j,
\end{eqnarray}
where $L_1>1$ is the constant given in Lemma \ref{lemma4.1}. Therefore,
\begin{eqnarray*}
\frac{\int_{B_{3R}(x_j)} |H_j|^2\cdot G_{h,j} dx}{\int_{B_{3R}(x_j)}|\phi_j|^2\cdot G_{h,j} dx}\leq  \frac{C_1L_1(1+L_M^2)(1+t^{-1})\mathbb{E}_j}{\int_{B_{2R}(x_j)}|\phi_j|^2dx}e^{-\frac{L_6}{T-t+h}}.
\end{eqnarray*}
By Lemma \ref{theta}, we observe when $t\in[T-\theta_j,T]$,
 \begin{eqnarray}\label{r4.18}
t^{-1}\leq\frac{2}{T}.
\end{eqnarray}
These, along with Lemma \ref{theta}, indicate when $t\in[T-\varepsilon_j,T]\subset[T-\theta_j,T]$,
\begin{eqnarray}\label{r4.22}
\frac{\int_{B_{3R}(x_j)} |H_j|^2\cdot G_{h,j} dx}{\int_{B_{3R}(x_j)}|\phi_j|^2\cdot G_{h,j} dx}\leq 2C_1L_1(1+L_M^2)(1+\frac{1}{T})e^{\frac{L_2}{\theta_j}}e^{-\frac{L_6}{\varepsilon_j+h}}.
\end{eqnarray}
Let
\begin{eqnarray}\label{r4.23}
\varepsilon_j=k\theta_j \textrm{ and } h=\mu\varepsilon_j,
\end{eqnarray}
 where  the parameter
  $\mu\in(0,1)$ will be given later and
 \begin{eqnarray}\label{re1}
k:=\min\{\frac{L_6}{2L_2},\frac{1}{2}\}
\end{eqnarray}
It is obvious that  $L_2-\frac{L_6}{k(1+\mu)}<0$.
These, together with (\ref{r4.23}), yield
\begin{eqnarray*}\label{5.11}
\frac{\int_{B_{3R}(x_j)} |H_j|^2\cdot G_{h,j} dx}{\int_{B_{3R}(x_j)}|\phi_j|^2\cdot G_{h,j} dx}&\leq& 2C_1L_1(1+L_M^2)(1+\frac{1}{T})e^{\frac{L_2}{\theta_j}}e^{-\frac{L_6}{k(1+\mu)\theta_j}}\\
&\leq& 2C_1L_1(1+L_M^2)(1+\frac{1}{T}).
\end{eqnarray*}
Then by (\ref{4.2}), we have
\begin{eqnarray*}
\frac{\int_{B_{3R}(x_j)}\eta_j^2|F|^2G_{h,j}dx}{\int_{B_{3R}(x_j)}|\phi_j|^2G_{h,j} dx}\leq L_{M}^2.
\end{eqnarray*}
In summary, we obtain when $t\in[T-\varepsilon_j,T]$,
\begin{eqnarray}\label{r4.25}
\frac{\int_{B_{3R}(x_j)}\big(|H_j|^2+\eta_j^2|F|^2\big)G_{h,j}dx}{\int_{B_{3R}(x_j)}|\phi_j|^2G_{h,j}dx}\leq 2(1+C_1)L_1(1+L_M^2)(1+\frac{1}{T}).
\end{eqnarray}

\noindent{\it Step 3. We prove inequality (\ref{r4.10}).}\\

By (\ref{r4.16}), (\ref{r4.17}), and (\ref{r4.25}), we have when $t\in[T-\varepsilon_j,T]$
\begin{eqnarray*}
|\frac{1}{2}\frac{d}{dt}\int_{B_{3R}(x_j)}|\phi_j|^2\cdot G_{h,j} dx&+&N_{h,j}(t)\int_{B_{3R}(x_j)}|\phi_j|^2\cdot G_{h,j} dx|\nonumber \\
&\leq&\big((1+C_1)L_1(1+L_M^2)(1+\frac{1}{T})+1\big)\int_{B_{3R}(x_j)} |\phi_j|^2G_{h,j}dx
\end{eqnarray*}
and
\begin{eqnarray*}
\frac{dN_{h,j}}{dt}(t)\leq
\frac{1}{T-t+h_j}N_{h,j}(t)+4(1+C_1)L_1(1+L_M^2)(1+\frac{1}{T}).
\end{eqnarray*}
Write $f_j(t)=\int_{B_{3R}(x_j)}|\phi_j(x,t)|^2\cdot G_{h,j}(x,t)dx$.
Then,  the following differential inequalities hold:
 \begin{eqnarray}\label{inequality}
   \left\{
\begin{array}{ll}
|\frac{1}{2}f_j'(t)+N_{h,j}(t)f_j(t)|
\leq\big(\hat{C}+1\big)f_j(t);\\
N_{h,j}'(t)\leq
\frac{1}{ T-t+\lambda}N_{h,j}(t)+4\hat{C},
\end{array}
\right.
 \end{eqnarray}
 where $t\in[T-\varepsilon_j, T]$ and
 \begin{eqnarray}\label{constant}
\hat{C}=(1+C_1)L_1(1+L_M^2)(1+\frac{1}{T}).
 \end{eqnarray}

Take a positive number $l$  such that
\begin{eqnarray}\label{r4.27}
r^2(1+l)=18R^2(1+D_l),
 \end{eqnarray}
where
\begin{eqnarray}\label{re2}
D_l=\frac{\ln(l+1)}{\ln(\frac{2l+1}{l+1})}.
\end{eqnarray}
It is easy to check the number $l$ only depends on $(R,r)$ and $l>1$.
We choose the parameter $\mu$ in (\ref{r4.23}) such that
  \begin{eqnarray}\label{r4.29}
 \frac{r^2}{8h}=\ln2&+&2(1+D_l)\ln(L_1(1+L_M))+\frac{(1+D_l)}{\theta_j}(L_2+\frac{18R^2}{k})+\mathcal{K}_{l}\nonumber \\
 &+&(1+D_l)\ln\big(e^{L_4L_{M}T+L_5(1+\frac{1}{T})}\frac{\mathbb{E}_j}{\int_{B_R(x_j)}|\varphi(x,T)|^2dx}\big),
  \end{eqnarray}
   where constants $L_2, L_4, L_5$ are given in Lemma \ref{theta} and
 \begin{eqnarray}\label{rr4.31}
  \mathcal{K}_{l}=2D_l\big[(\hat{C}+1)lh+4\hat{C}(lh)^2\big]+2(\hat{C}+1)lh+8\hat{C}l(l+1)h^{2}\cdot\ln(l+1).
 \end{eqnarray}
 We first observe
  \begin{eqnarray*}
 \frac{r^2}{8h}>\frac{(1+D_l)}{\theta_j}\frac{18R^2}{k}.
 \end{eqnarray*}
This, along with (\ref{r4.23}) and (\ref{r4.27}) shows that $\mu\in(0,1)$. Moreover, we have
   \begin{eqnarray}\label{r4.31}
0<2hl<\frac{r^2l}{4\frac{(1+D_l)}{\theta_j}\frac{18R^2}{k}}<\varepsilon_j.
  \end{eqnarray}
 Then, we choose $t_3=T$, $t_2=T-lh$, and $t_1=T-2lh$. Clearly, $T-\varepsilon_j<t_1<t_2<t_3=T$.
Applying Lemma \ref{lemma2.5} and (\ref{inequality}), we obtain
\begin{eqnarray}\label{5.13}
f_j(t_2)^{1+D_l}\leq f_j(t_3)\cdot f_j(t_1)^{D_l}\cdot e^{\mathcal{K}_{l}}.
 \end{eqnarray}


Using (\ref{r4.5}) and (\ref{r4.11}), we have
\begin{eqnarray*}
f_j(t_1)=\int_{ B_{3R}(x_j)}|\phi_j(x,t_1)|^2\cdot e^{-\frac{|x-x_j|^2}{4h(2l+1)}}dx\leq L_1(1+L_M)\mathbb{E}_j;
 \end{eqnarray*}
 \begin{eqnarray*}
f_j(t_3)&=&\int_{B_{3R}(x_j)}|\phi(x,T)|^2\cdot e^{-\frac{|x-x_j|^2}{4h}}dx\\
&\leq&\int_{B_{r}(x_j)}|\varphi(x,T)|^2dx
+e^{-\frac{r^2}{4h}} L_1(1+L_M)\mathbb{E}_j.
 \end{eqnarray*}
These, together with (\ref{5.13}), indicate
\begin{eqnarray*}
\big[\int_{B_{3R}(x_j)}|\phi(x,t_2)|^2dx\big]^{1+D_l}&\leq& e^{\frac{9R^2(1+D_l)}{4h(l+1)}}\cdot e^{\mathcal{K}_{l}}\cdot L_1(1+L_M)\\
&\times&\big(\int_{B_{r}(x_j)}|\varphi(x,T)|^2dx
+e^{-\frac{r^2}{4h}}\mathbb{E}_j\big)\cdot[L_1(1+L_M)]^{D_l}\cdot\mathbb{E}_j^{D_l}.
 \end{eqnarray*}
It follows from (\ref{r4.27}) that
\begin{eqnarray}\label{5.16}
\big[\int_{B_{3R}(x_j)}|\phi(x,t_2)|^2dx\big]^{1+D_l}&\leq & e^{\mathcal{K}_{l}}\cdot [L_1(1+L_M)]^{1+D_l}\nonumber \\
&\times&\big(e^{\frac{r^2}{8h}}\int_{B_{r}(x_j)}|\varphi(x,T)|^2dx
+e^{-\frac{r^2}{8h}}\mathbb{E}_j\big)\cdot\mathbb{E}_j^{D_l}.
 \end{eqnarray}
 Using (\ref{r4.5}) and Lemma \ref{theta}, we obtain
\begin{eqnarray*}
\int_{B_R(x_j)}|\varphi(x,T)|^2dx&\leq&  L_1(1+L_M)\cdot\mathbb{E}_j\\
&\leq & L_1(1+L_M)\cdot e^{\frac{1}{\theta_j}(L_2+\frac{18R^2}{k})}\cdot\int_{B_{2R}(x_j)}|\varphi(x,t_2)|^2dx.
 \end{eqnarray*}
This, together with (\ref{5.16}) and (\ref{r4.11}), yields
\begin{eqnarray}\label{r4.34}
\big[\int_{B_{R}(x_j)}|\varphi(x,T)|^2dx\big]^{1+D_l}&\leq &  [L_1(1+L_M)]^{2(1+D_l)}\cdot  e^{\frac{(1+D_l)}{\theta_j}(L_2+\frac{18R^2}{k})}\cdot e^{\mathcal{K}_{l}}\nonumber \\
&\times&\!\!\!\!\!\big(e^{\frac{r^2}{8h}}\int_{B_{r}(x_j)}|\varphi(x,T)|^2dx
+e^{-\frac{r^2}{8h}}\mathbb{E}_j\big)\cdot\mathbb{E}_j^{D_l}.
 \end{eqnarray}
By (\ref{r4.29}), we have
  \begin{eqnarray*}
e^{-\frac{r^2}{8h}}\cdot  [L_1(1+L_M)]^{2(1+D_l)}\!\cdot\!e^{\mathcal{K}_{l}}\!\cdot\!  e^{\frac{(1+D_l)}{\theta_j}(L_2+\frac{18R^2}{k})}\!\cdot\!\mathbb{E}_j^{1+D_l}\leq\frac{1}{2}\big(\int_{B_R(x_j)}|\varphi(x,T)|^2dx\big)^{1+D_l}.
 \end{eqnarray*}
It, together with (\ref{r4.34}), indicates
\begin{eqnarray}\label{r4.35}
\big[\int_{B_R(x_j)}|\varphi(x,T)|^2dx\big]^{1+D_l}&\leq& 2[L_1(1+L_M)]^{2(1+D_l)}\cdot e^{\frac{(1+D_l)}{\theta_j}(L_2+\frac{18R^2}{k})}\cdot
e^{\mathcal{K}_{l}} \nonumber \\
&\times&\big(e^{\frac{r^2}{8h}}\int_{B_{r}(x_j)}|\varphi(x,T)|^2dx
)\cdot\mathbb{E}_j^{D_l}.
 \end{eqnarray}
 Using (\ref{r4.29}) again, we obtain
\begin{eqnarray}\label{r4.36}
 e^{\frac{r^2}{8h}}&=&2[L_1(1+L_M)]^{2(1+D_l)}\cdot}e^{\frac{(1+D_l)}{\theta_j}(L_2+\frac{18R^2}{k})}\cdot e^{\mathcal{K}_{l}\nonumber \\
 &\times&\big(e^{L_4L_{M}T+L_5(1+\frac{1}{T})}\frac{\mathbb{E}_j}{\int_{B_R(x_j)}|\varphi(x,T)|^2dx}\big)^{1+D_l},
\end{eqnarray}
By Lemma \ref{theta},
 \begin{eqnarray*}
e^{\frac{(1+D_l)}{\theta_j}(L_2+\frac{18R^2}{k})}=
\bigg(e^{L_4L_{M}T+L_5(1+\frac{1}{T})}\frac{\mathbb{E}_j}{\int_{B_{R}(x_j)}|\varphi(x,T)|^2dx}\bigg)^{L_3(1+D_l)(L_2+\frac{18R^2}{k})}.
\end{eqnarray*}
It, together with (\ref{r4.35}) and (\ref{r4.36}), deduces
\begin{eqnarray*}
&&\big[\int_{B_R(x_j)}|\varphi(x,T)|^2dx\big]^{1+D_l}\\
&\leq& \mathcal{L}\big (\frac{\mathbb{E}_j}{\int_{B_R(x_j)}|\varphi(x,T)|^2dx}\big)^{(1+D_l)[1+2L_3(L_2+\frac{18R^2}{k})]}\int_{B_{r}(x_j)}|\varphi(x,T)|^2dx
\cdot\mathbb{E}_j^{D_l},
 \end{eqnarray*}
where
\begin{eqnarray}\label{r4.37}
\mathcal{L}=4[L_1(1+L_M)]^{4(1+D_l)}\cdot e^{2\mathcal{K}_{l}} \cdot e^{[L_4L_{M}T+L_5(1+\frac{1}{T})]\cdot(1+D_l)\cdot[1+2L_3(L_2+\frac{18R^2}{k})]}.
 \end{eqnarray}
It follows from (\ref{constant}), (\ref{rr4.31}), $l>1$ and  $2lh<\theta_j<\min\{1, T/2\}$ that
 \begin{eqnarray}\label{r4.39}
 e^{\mathcal{K}_{l}}\leq e^{C_2(1+L_M^2)(1+\frac{1}{T})},
 \end{eqnarray}
where $C_2$ is a positive number only depending on $(R, r)$.
Therefore,
\begin{eqnarray}\label{r4.40}
\int_{B_R(x_j)}|\varphi(x,T)|^2dx\leq \mathcal{L}^{\beta}\big(\int_{B_{r}(x_j)}|\varphi(x,T)|^2dx\big)^{\beta}
\cdot\mathbb{E}_j^{1-\beta},
 \end{eqnarray}
where
\begin{eqnarray}\label{r4.41}
\beta=\frac{1}{2(1+D_l)\cdot(1+L_3(L_2+\frac{18R^2}{k}))}.
 \end{eqnarray}
It follows from (\ref{re1}), (\ref{re2}), and Lemma \ref{theta} that $\beta$ only depends on $(R,r)$.
We obtain (\ref{r4.10}). This  complete the proof.
\end{proof}

\section{Proof of the main results}
\ \ \ \
In this section, we present the proofs of Theorem \ref{theorem1} and Theorem \ref{theorem2} using the techniques developed in the previous sections.

\subsection{Proof of Theorem \ref{theorem1}}
\begin{proof}
The proof will be organized in two steps.

\noindent{\it Step 1.   We  prove (\ref{1.4}).}\

Together Theorem \ref{interpolation} with Young's inequality, we have that for each $j\in \mathbb{N}^+$,
\begin{eqnarray}\label{r5.1}
\int_{I_j}|\varphi(x,T)|^2dx&\leq& e^{C(1+L_M^2)(\frac{1}{T}+T)}[\epsilon\int_{B_{r}(x_j)}|\varphi(x,T)|^2dx+C(\epsilon)\mathbb{E}_j],
 \end{eqnarray}
where $\epsilon>0$ will be given later and $C(\epsilon)=(1-\beta)(\frac{\epsilon}{\beta})^{-\frac{\beta}{1-\beta}}$.
Summing (\ref{r5.1}) for $j\in \mathbb{N}^+$ and using $(iii)$ of $(A_1)$, we deduce
\begin{eqnarray}\label{r5.2}
\int_{R^{n}}|\varphi(x,T)|^2dx&\leq& e^{C(1+L_M^2)(\frac{1}{T}+T)}\big(\epsilon\int_{\omega}|\varphi(x,T)|^2dx+C(\epsilon)\sum_{j\in\mathbb{N}^+}\mathbb{E}_j\big).
 \end{eqnarray}
Clearly,  there are only finite $j\in \mathbb{N}^+$ satisfying  $x\in B_{5R}(x_j)$ for each $x\in\mathbb{R}^n$, and  there exists a positive number $C_3>1$ (which only depends on $n$) such that for each $t\in[0,T]$,
\begin{eqnarray*}
\sum_{j\in \mathbb{N}^+}\int_{B_{5R}(x_j)}|\varphi(x,t)|^2dx\leq C_3\int_{\mathbb{R}^n}|\varphi(x,t)|^2dx.
\end{eqnarray*}
Therefore, we have
\begin{eqnarray*}
\sum_{j\in\mathbb{N}^+}\mathbb{E}_j\leq C_3\mathbb{E},
\end{eqnarray*}
where
\begin{eqnarray}\label{global}
\mathbb{E}:=\int_{R^{n}}|\varphi(x,0)|^2dx+\int_0^T\int_{R^{n}}|\varphi(x,s)|^2dxds.
\end{eqnarray}
It, along with (\ref{r5.2}), indicates that
\begin{eqnarray}\label{r5.4}
\int_{R^{n}}|\varphi(x,T)|^2dx\leq C_3e^{C(1+L_M^2)(\frac{1}{T}+T)}\big(\epsilon\int_{\omega}|\varphi(x,T)|^2dx+C(\epsilon)\mathbb{E}\big).
 \end{eqnarray}
Taking $\epsilon=\big(\frac{\mathbb{E}}{\int_{\omega}|\varphi(x,T)|^2dx}\big)^{1-\beta}$ in (\ref{r5.4}), we obtain
\begin{eqnarray}\label{r5.5}
\int_{R^{n}}|\varphi(x,T)|^2dx\leq C_3e^{C(1+L_M^2)(\frac{1}{T}+T)}\big(1+(1-\beta)\beta^{\frac{\beta}{1-\beta}}\big)
\big(\int_{\omega}|\varphi(x,T)|^2dx\big)^{\beta}\mathbb{E}^{1-\beta}.
 \end{eqnarray}
By a standard energy method, we have for each $t\in[0,T]$,
\begin{eqnarray*}
\int_{R^{n}}|\varphi(x,t)|^2dx\leq e^{2L_Mt}\int_{R^{n}}|\varphi(x,0)|^2dx.
 \end{eqnarray*}
Thus,
\begin{eqnarray}\label{r5.6}
\mathbb{E}\leq(1+Te^{2L_MT})\int_{R^{n}}|\varphi(x,0)|^2dx.
 \end{eqnarray}
Clearly,  $1+ T\cdot e^{2L_{M}T}\leq(1+T)e^{2L_{M}T}\leq e^T\cdot e^{2L_{M}T}$.
Combining the above, (\ref{r5.5}), and  (\ref{r5.6}) leads to (\ref{1.3}).\

\noindent{\it Step 2.   We  prove \eqref{1.4}.}

 We first claim that   $\varphi(\cdot,t)\neq0$ for each $t\in(0,T]$.
By contradiction, we assume that  $\varphi(\cdot,t)=0$ for some $t\in(0,T]$.
Since  $\varphi(\cdot,0)\neq0$, there exists a $t_0\in(0, T]$ satisfying
 \begin{eqnarray*}
t_0:=\inf\{t\in(0,T]\:|\: \varphi(\cdot,t)=0\}.
 \end{eqnarray*}
 Since  $\varphi\in C([0,T]; L^2(\mathbb{R}^n))$, we have
  \begin{eqnarray}\label{r5.7}
\varphi(\cdot, t_0)=0  \textrm{  and  }  \varphi(\cdot, t)\neq0,  \textrm{  for each  } t\in[0,t_0).
 \end{eqnarray}
Define a function
   \begin{eqnarray}\label{r5.8}
\chi(t) := {\|\varphi(\cdot, t)\|_2^2\over \|\varphi(\cdot, t)\|^2_{H^{-1}}},\; t\in[0,t_0),
 \end{eqnarray}
 where  $\|\cdot\|_{H^{-1}}$ is the norm in  $H^{-1}(\mathbb{R}^n)$.

By direct computations,  we obtain that for $t\in(0,T]$,
\begin{eqnarray}\label{r5.9}
\begin{cases}
{1\over 2}{d\over dt}\|\varphi(\cdot, t)\|_2^2 +\|\varphi(\cdot, t)|\|_{H^1}^2 = \langle -F(\cdot, t)+\varphi(\cdot, t), \varphi(\cdot, t)\rangle,\\
{1\over 2}{d\over dt}\|\varphi(\cdot, t)\|_{H^{-1}}^2 + \|\varphi(\cdot, t)\|_2^2 = \langle -F(\cdot, t)+\varphi(\cdot, t),\varphi(\cdot, t)\rangle_{H^{-1}},
\end{cases}
\end{eqnarray}
where $\langle \cdot,\cdot\rangle$  is the inner product of $L^2(\R^n)$,
 $\langle\cdot,\cdot\rangle_{H^{-1}}$ stands for  the inner product of $H^{-1}(\R^n)$, and
 $\|\cdot\|_{H^{1}}$ is the norm in $H^{1}(\R^n)$.
Therefore,
\begin{eqnarray}\label{r5.10}
\chi'(t)
=\frac{2}{\|\varphi\|_{H^{-1}}^4}&\times&\big(-\langle F, \varphi\rangle\|\varphi\|^2_{H^{-1}}-\|\varphi\|^2_{H^1}\|\varphi\|^2_{H^{-1}}\nonumber \\
&+&\langle F, \varphi\rangle_{H^{-1}}\|\varphi\|_2^2 +\|\varphi\|_2^4\big), \;\; t\in[0,t_0).
\end{eqnarray}
After some computations,
\begin{eqnarray}\label{r5.11}
&&\|\varphi\|_2^4 + \|\varphi\|_2^2\langle F, \varphi\rangle_{H^{-1}}  \nonumber\\
&=& \big(\|\varphi\|_2^2+\langle F/2, \varphi\rangle_{H^{-1}}\big)^2-|\langle F/2, \varphi\rangle_{H^{-1}}|^2 \nonumber \\
&\leq& \big(\|\varphi\|_2^2+\langle F/2, \varphi\rangle_{H^{-1}}\big)^2 \\
&\leq&\|\varphi\|^2_{H^{-1}}\big(\|\varphi\|^2_{H^1}+\|F/2\|_{H^{-1}}^2+\langle F,\varphi\rangle\big), \;\;  t\in[0,T].\nonumber
\end{eqnarray}
This, along with (\ref{r5.10}), indicates
\begin{eqnarray*}
\chi'(t)\le \frac{2}{\|\varphi\|^2_{H^{-1}}}\|{F/2}\|_{H^{-1}}^2,\;\;t\in[0,t_0).
\end{eqnarray*}
Using (\ref{4.2}), and solving this inequality, we obtain
 \begin{eqnarray}\label{r5.12}
\chi(t)\le e^{\frac{L_M^2t}{2}}\chi(0),\;\;t\in[0,t_0).
 \end{eqnarray}

Next, by the second equation of (\ref{r5.9}) and (\ref{4.2}),  we have
   \begin{eqnarray*}
 &&0= \frac{1}{2}\frac{d}{dt}\|\varphi\|_{H^{-1}}^2 +\chi(t) \|\varphi\|_{H^{-1}}^2 + |\langle F-\varphi, \varphi\rangle_{H^{-1}}|\\
  &&\quad\le \frac{1}{2}\frac{d}{dt}\|\varphi\|_{H^{-1}}^2 +\chi(t) \|\varphi\|_{H^{-1}}^2 +  \|\varphi\|_{H^{-1}}(\|F\|_{H^{-1}}-\|\varphi\|_{H^{-1}})\\
   &&\quad\le \frac{1}{2}\frac{d}{dt}\|\varphi\|_{H^{-1}}^2 +e^{\frac{L_M^2t}{2}}\chi(0)\|\varphi\|_{H^{-1}}^2 +  L_M\|\varphi\|_{2}\|\varphi\|_{H^{-1}}, \;\;t\in[0,t_0),
 \end{eqnarray*}
 which, together with (\ref{r5.12}), yields
  \begin{eqnarray*}
 &&0\leq{1\over 2}{d\over dt}\|\varphi\|_{H^{-1}}^2 +e^{\frac{L_M^2T}{2}} \big(\chi(0) + L_M\sqrt{\chi(0)}\big)\|\varphi\|_{H^{-1}}^2,\;\;t\in[0,t_0).
 \end{eqnarray*}
Solving this inequality, we obtain
\begin{eqnarray}\label{r5.13}
 \|\varphi(\cdot,0)\|_{H^{-1}}^2\le e^{2e^{\frac{L_M^2T}{2}}\big(\chi(0) + L_M\sqrt{\chi(0)}\big)T}\|\varphi(\cdot,t)\|_{H^{-1}}^2,\;\; t\in[0,t_0).
\end{eqnarray}

Then, it follows  from (\ref{r5.13}) that when $t\in[0,t_0)$,
 \begin{eqnarray*}
\frac{\|\varphi(\cdot, 0)\|_{2}^2}{\|\varphi(\cdot, t)\|_{2}^2}\leq \frac{\|\varphi(\cdot, 0)\|_{H^{-1}}^2}{\|\varphi(\cdot, t)\|_{H^{-1}}^2}\chi(0)
\leq e^{2e^{\frac{L_M^2T}{2}}\big(\chi(0) + L_M\sqrt{\chi(0)}\big)T}\chi(0).
 \end{eqnarray*}
Obviously,  we have
 \begin{eqnarray*}
\sqrt{\chi(0)}\leq\chi(0) \textrm{ and }  \chi(0)\leq e^{\chi(0)}.
 \end{eqnarray*}
Therefore,
  \begin{eqnarray}\label{r5.14}
 \frac{\|\varphi(\cdot, 0)\|_{2}^2}{\|\varphi(\cdot, t)\|_{2}^2}\leq  e^{2e^{\frac{L_M^2T}{2}}\big(\chi(0) + L_M\sqrt{\chi(0)}\big)(1+T)},\:\:t\in[0,t_0).
 \end{eqnarray}
This leads to a contradiction with (\ref{r5.7}).
 Hence, we prove that $\varphi(\cdot,t)\neq0$ for each $t\in(0,T]$.\\

Second, we are going to prove (\ref{1.4}).
We have the function $t\rightarrow{\|\varphi(\cdot, t)\|_2^2\over \|\varphi(\cdot, t)\|^2_{H^{-1}}}$ is well defined over $[0,T]$.
Applying the same argument, we can verify that
   \begin{eqnarray*}
\|\varphi(\cdot, 0)\|_{2}^2\leq  e^{2e^{\frac{L_M^2T}{2}}\big(\chi(0) + L_M\sqrt{\chi(0)}\big)(1+T)}\|\varphi(\cdot, t)\|_{2}^2,\:\:t\in[0,T],
 \end{eqnarray*}
which, together with (\ref{1.3}), gives (\ref{1.4}).
 Hence, we  complete the proof.
\end{proof}

\subsection{Proof of Theorem \ref{theorem2}}
\begin{proof}
Let $y_i$ $(i=1,2)$ be the solution to (\ref{1.1}) with initial value $y_i^0$ and let $\varphi=y_1-y_2$.
Take $\delta\in(0, T)$ and write $T_\delta=T-\delta$.  We first define a function  $\varphi_{\delta}:=\varphi(x,t+\delta)$,
  where  $(x,t)\in\Omega\times[0,T_\delta]$.

Applying Theorem \ref{theorem3.2}, $y_i^0\in L^2(\R^n)$  ($i=1,2$, and $1<p<\frac{4}{n}+1$),
we have  $y_i\in C([0,T]; L^2(\mathbb{R}^n))\cap L_{loc}^\infty((0,T); L^\infty(\mathbb{R}^n))$. Thus,
$y_i(\cdot, \delta)\in L^2(\R^n)\cap L^\infty(\R^n)$ $(i=1,2).$
It follows from  Theorem \ref{theorem1} that\\
 $(i)$ There are constants  $\beta=\beta(L, r)\in (0,1)$ and $C=C(L, r, T_{\delta}, L_{M_\delta})>0$
  such that
  \begin{eqnarray}\label{r5.15}
\int_{\R^n}|\varphi_\delta(x,T_\delta)|^2dx
&\leq& \!\!C
\big(\int_{\R^n}|\varphi_{\delta}(\cdot,0)|^2dx\big)^{1-\beta}\big(\int_{\R^n}|\varphi_\delta(x,T_\delta)|^2dx\big)^\beta,
\end{eqnarray}
$(ii)$ If $\varphi_\delta(\cdot,0)\neq0$, then there exist a positive number  $C=C(L, r, T, L_{M_\delta})$ such that
\begin{eqnarray}\label{r5.16}
\int_{\R^n}|\varphi_{\delta}(\cdot,0)|^2dx &\leq& Ce^{C
\frac{\|\varphi_{\delta}(\cdot,0)\|_{2}^2}{\|\varphi_{\delta}(\cdot,0)\|_{H^{-1}}^2}}\int_{\omega}|\varphi_{\delta}(x,T_\delta)|^2dx.
\end{eqnarray}
where $M_\delta=\max\{\|y_i\|_{L^\infty(\delta, T; L^{\infty}(\R^n))}\:|\:i=1,2\}$ and $L_{M_\delta}>0$
is the Lipschitz constant of $f$ on the domain
$\mathcal{D}_{M_\delta}=\{s\in \mathbb{R}\:|\: |s|\leq M_{\delta} \}$.
Taking $\delta=\frac{T}{2}$ in (\ref{r5.15}), then using (\ref{3.5}), we obtain (\ref{1.5}).

If $\varphi(\cdot,T)=0$ over $\omega$, i.e.,  $\varphi_{\delta}(\cdot,T_\delta)=0$ over $\omega$, then by (\ref{r5.16}),
we have $\varphi_{\delta}(\cdot,0)=0$  over $\R^n$, i.e.,
$\varphi(\cdot,\delta)=0$  over $\R^n$. By Theorem \ref{theorem3.2}, we have $y_i\in C([0,T]; L^2(\R^n))$. Letting $\delta\rightarrow0+$, we obtain $\varphi(\cdot,0)=0$, i.e. $y_1^0=y_2^0$.
 We complete the proof.
\end{proof}

\section{Appendix A}
\subsection{Proof of Theorem \ref{theorem3.1}}

\begin{proof}
Let  $T^*>0$, which will be chosen later and let $\varrho=2\cdot\min\{\|y_0\|_{q}, \|y_0\|_{\infty} \}$. Assume $y_0\in L^q(\mathbb{R}^n)\cap L^\infty(\mathbb{R}^n)$.
Define the function spaces:
 $\mathcal{A}:=C([0,T^*]; L^q(\mathbb{R}^n))\cap L^\infty((0,T^*); L^\infty(\mathbb{R}^n))$ and
  $\mathcal{B}:=\{\xi\in\mathcal{A}\: |\: \|\xi(t)\|_{q}\leq \varrho \textrm{ and } \|\xi(t)\|_{\infty}\leq \varrho \textrm{ for } t\in[0,T*]\}.$
 The proof will be split into two steps.

 \noindent{\it Step 1. We first prove
that for some $T^*>0$, equation (\ref{1.1}) admits a solution  $y\in\mathcal{A}$.}

 For any $\xi\in \mathcal{B}$, we define $y(t;\xi):=e^{t\triangle}y_0+\int_0^te^{(t-s)\triangle}f(\xi(s))ds,  \: t\in[0,T^*].$
 Clearly, $y(\cdot;\xi)\in \mathcal{A}$. We equip $\mathcal{B}$ with the distance
$d(\xi,\eta)=\sup_{0<t<T^*}\|\xi(t)-\eta(t)\|_{\infty}.$

 Next, we define a map $\Lambda:\:\mathcal{B}\mapsto \mathcal{A}$ by
$\Lambda(\xi):=y(\cdot;\xi),\;\mbox{ for }\; \xi\in \mathcal{B}.$
It follows from $(A_2)$ that
$|f(y)|=|f(y)-f(0)|\leq L_{\varrho}|y|,$
where $L_{\varrho}$ is the Lipschitz constant of $f$ on the set $\{\tau\in\mathbb{R}\: |\: |\tau|\leq\varrho\}$.
By ($A_2$) and Lemma \ref{lemma2.1}, we have
\begin{eqnarray}\label{3.1}
\|y(t;\xi)\|_q
&\leq& \|e^{t\triangle}y_0\|_q+\int_0^t\|e^{(t-s)\triangle}f(\xi(s))\|_{q}ds \nonumber\\
&\leq&\|y_0\|_q+L_{\varrho}T^*\sup_{0<t<T^*}\|\xi(t)\|_{q}\nonumber\\
&\leq&\|y_0\|_q+L_{\varrho}T^*\varrho,\:\:t\in[0,T^*].
\end{eqnarray}
Then,
 \begin{eqnarray}\label{3.2}
\|y(t;\xi)\|_{\infty}
&\leq& \|e^{t\triangle}y_0\|_{\infty}+\int_0^t\|e^{(t-s)\triangle}f(\xi(s))\|_{\infty}ds \nonumber\\
&\leq&\|y_0\|_\infty+\int_0^t \|f(\xi(s))\|_{\infty}ds \nonumber\\
&\leq&\|y_0\|_\infty+L_{\varrho}T^*\varrho,\:\:t\in[0,T^*].
\end{eqnarray}
Finally, we obtain for $\xi,\eta\in\mathcal{B}$ and $t\in[0,T^*]$
 \begin{eqnarray}\label{3.3}
\|y(t;\xi)-y(t;\eta)\|_{\infty}
&\leq&\int_0^t\|e^{(t-s)\triangle}(f(\xi(s))-f(\eta(s)))\|_{\infty}ds \nonumber\\
&\leq&\int_0^t \|f(\xi(s))-f(\eta(s))\|_{\infty}ds \nonumber\\
&\leq&L_\varrho T^*d(\xi,\eta), \:\:t\in[0,T^*].
\end{eqnarray}
Applying (\ref{3.1}), (\ref{3.2}), and (\ref{3.3}), it shows that if $T^*$ is small enough (which only depends on $\|y_0\|_q$, and $\|y_0\|_\infty$),
then $\Lambda(\mathcal{B})\subseteq \mathcal{B}$  and  $\Lambda:\:\mathcal{B}\mapsto \mathcal{B}$ is a strict   contraction.
By the Banach fixed-point theorem, there exists a unique $y\in\mathcal{B}$ satisfying (\ref{1.1}).

\noindent{\it Step 2. We
show that the solution $y$ is unique in $\mathcal{A}$.}

We suppose that $y, \bar{y}\in$ $\mathcal{A}$ are both solutions to equation (\ref{1.1}) with the initial value $y_0$. Write
  \begin{eqnarray*}
\varrho_1:=\max \{\|y\|_{L^\infty((0,T^*); L^\infty(\mathbb{R}^n))}, \|\bar{y}\|_{L^\infty((0,T^*); L^\infty(\mathbb{R}^n))}\}.
 \end{eqnarray*}
Using $(A_2)$ and Lemma \ref{lemma2.1} again, we have that for $t\in(0, T^*]$,
\begin{eqnarray*}
\|y(t)-\bar{y}(t)\|_\infty&=&\|\int_{0}^t e^{(t-s)\triangle}[f\big(y(t)\big)-f\big(\tilde{y}(t)\big)]ds\|_\infty\\
&\leq& L_{\varrho_1}\int_{0}^t\|y-\bar{y}\|_{\infty}ds
 \end{eqnarray*}
where $L_{\varrho_1}$ is the Lipschitz constant of $f$ on the set $\{\tau\: |\: |\tau|\leq \varrho_1\}$.
Uniqueness now is obtained by Gronwall's inequality.
 This complete the proof.
\end{proof}

\subsection{Proof of Lemma \ref{lemma2.3}}
\ \ \ \
This part is devoted to the proof of Lemma \ref{lemma2.3}.
We borrow the idea from Theorem A1 in \cite{Brezis}, and first introduce a differential inequality.
\begin{lemma}\label{lemma2.2}
Let $T>0$.
 Suppose that $g(\cdot)\in C^1[0,T]$ with $g(t)\geq0$ for all $t\in[0,T]$. If there exist three positive numbers $\alpha, A, B$ such that
 \begin{eqnarray*}
g'(t)+Ag(t)^{1+\alpha}\leq Bg(t)\;\;\mbox{for each}\;\;t\in[0,T],
 \end{eqnarray*}
 then
  \begin{eqnarray*}
g(t)\leq\big(\frac{1}{\alpha At}\big)^{\frac{1}{\alpha}}e^{Bt}\;\;\mbox{for each}\;\;t\in (0,T].
 \end{eqnarray*}
\end{lemma}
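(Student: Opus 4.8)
The plan is to read the hypothesis as a Bernoulli-type differential inequality and to linearize it by two successive substitutions. First I would dispose of the trivial situation: if $g(t)=0$ at the point $t\in(0,T]$ under consideration, the claimed bound holds automatically, so I may assume $g(t)>0$ and work on the maximal subinterval $(t_*,t]\subseteq[0,T]$ on which $g$ remains strictly positive, where $t_*\in[0,t)$ is either $0$ or a point with $g(t_*)=0$.

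To eliminate the zeroth-order term $Bg$, I would introduce $u(s):=e^{-Bs}g(s)$. Since $u'(s)=e^{-Bs}\big(g'(s)-Bg(s)\big)$ and, by hypothesis, $g'(s)-Bg(s)\leq -Ag(s)^{1+\alpha}$, substituting $g(s)=e^{Bs}u(s)$ yields $u'(s)\leq -Ae^{\alpha Bs}u(s)^{1+\alpha}$ on $(t_*,t]$. This is the decisive step: on $(t_*,t]$ we have $u>0$, so dividing by $u^{1+\alpha}$ and using $(u^{-\alpha})'=-\alpha u^{-\alpha-1}u'$ turns the nonlinear inequality into the \emph{linear} one $(u^{-\alpha})'(s)\geq \alpha A e^{\alpha Bs}$, whose right-hand side no longer involves $u$.

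Next I would integrate $(u^{-\alpha})'\geq \alpha A e^{\alpha Bs}$ over $[s,t]$ to get $u(t)^{-\alpha}\geq u(s)^{-\alpha}+\frac{A}{B}\big(e^{\alpha Bt}-e^{\alpha Bs}\big)$ and let $s\to t_*^{+}$. If $g(t_*)=0$ then $u(s)^{-\alpha}\to+\infty$, forcing $u(t)^{-\alpha}=+\infty$, i.e.\ $g(t)=0$, a contradiction; hence necessarily $t_*=0$, and since $u(0)^{-\alpha}\geq0$ we obtain $u(t)^{-\alpha}\geq \frac{A}{B}\big(e^{\alpha Bt}-1\big)$. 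Finally I would apply the elementary inequality $e^{x}-1\geq x$ with $x=\alpha Bt$ to reduce this to $u(t)^{-\alpha}\geq \alpha A t$, whence $u(t)\leq(\alpha A t)^{-1/\alpha}$ and $g(t)=e^{Bt}u(t)\leq\big(\frac{1}{\alpha A t}\big)^{1/\alpha}e^{Bt}$, which is exactly the asserted estimate.

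The main obstacle is purely technical: the possible vanishing of $g$ prohibits a naive division by $g^{1+\alpha}$ (equivalently by $u^{1+\alpha}$). I address this by localizing to the interval where $g>0$ and taking the one-sided limit at its left endpoint, the case of a vanishing endpoint being ruled out by the contradiction just described. Apart from this, one only has to keep track of the sign reversal when multiplying through by $-\alpha$; the remaining manipulations are routine.
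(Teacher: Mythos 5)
Your proof is correct. Note that the paper does not actually prove this lemma: it simply defers to the reference [Brezis] (Brezis--Cazenave), so your argument fills in what the paper omits, and it is the standard one for this kind of Bernoulli-type inequality. Each step checks out: the substitution $u(s)=e^{-Bs}g(s)$ gives $u'\leq -Ae^{\alpha Bs}u^{1+\alpha}$, the sign reversal in passing to $(u^{-\alpha})'\geq \alpha A e^{\alpha Bs}$ is handled properly, the integration and the limit $s\to t_*^+$ are legitimate, and the contradiction correctly rules out a zero of $g$ to the left of $t$. Two cosmetic simplifications are available. First, since $e^{\alpha Bs}\geq 1$ you could weaken the differential inequality to $(u^{-\alpha})'\geq \alpha A$ before integrating; this yields $u(t)^{-\alpha}\geq u(s)^{-\alpha}+\alpha A(t-s)$ directly and removes both the $\frac{A}{B}\bigl(e^{\alpha Bt}-e^{\alpha Bs}\bigr)$ bookkeeping and the final appeal to $e^{x}-1\geq x$. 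Second, the vanishing-point dichotomy can be dispatched without the limiting contradiction: the hypothesis implies $g'\leq Bg$, so by Gronwall $g(t)\leq g(t_*)e^{B(t-t_*)}$, and a zero of $g$ at $t_*<t$ would force $g(t)=0$ at once. These are matters of taste; your argument as written is complete and sound.
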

This inequality can be found in \cite{Brezis}.\

\textbf{Proof of Lemma \ref{lemma2.3}.}
\begin{proof}
The proof will be split into the following two steps.

\textit{Step 1:  Given $u_0\in L^\infty(\mathbb{R}^n)$, we will prove that equation (\ref{2.2}) admits a unique
solution $u\in L^\infty((0,T; L^\infty(\mathbb{R}^n))$. Moreover, the solution $u$ satisfies that
\begin{eqnarray}\label{6.5}
\|u(t)\|_{\infty}\leq Ce^{C\mathcal{L}^{\vartheta}t}\|u_0\|_{\infty},\: for\:\: t\in[0,T],
 \end{eqnarray}
 where the numbers $\mathcal{L}, \vartheta$ are  given in equation (\ref{2.3}), and the positive number $C$ only depends on $(n,\sigma, \gamma)$. }

We still write $\mathcal{A}:=L^\infty(0, T; L^\infty(\mathbb{R}^n))$. For each $\xi\in \mathcal{A}$, we define function $u(\cdot;\xi)$ as follows:
\begin{eqnarray*}
u(t;\xi):=S(t)u_0+\int_0^tS(t-s)(a\xi) ds,\:\:t\in[0,T].
\end{eqnarray*}
By Lemma \ref{lemma2.1}, we observe that $u(\cdot;\xi)\in\mathcal{A}$.
Define a map $\Psi: \mathcal{A}\rightarrow\mathcal{A}$ by setting $\Psi(\xi):=u(\cdot;\xi)$.
For $\xi_i\in\mathcal{A}$ $(i=1,2)$ and $t\in[0,T]$, we have
\begin{eqnarray*}
\|\Psi_1(\xi_2)(t)-\Psi_1(\xi_1)(t)\|_{\infty}
=\|\int_0^tS(t-s)\big(a(\xi_2-\xi_1)\big)ds\|_{\infty}.
\end{eqnarray*}
 Using Lemma \ref{lemma2.1} and $\sigma>\frac{n}{2}$, we obtain
\begin{eqnarray}\label{6.3}
\|\Psi_1(\xi_2)-\Psi_1(\xi_1)\|_{\mathcal{A}}\leq\mathcal{L}\frac{T^{1-\kappa}}{1-\kappa}\|\xi_2-\xi_1\|_{\mathcal{A}},
\end{eqnarray}
where  $\kappa=\frac{n}{2\sigma}$.
We first consider the case that
\begin{eqnarray}\label{6.4}
\mathcal{L}\mathcal{}\frac{T^{1-\kappa}}{1-\kappa}<\frac{1}{2}.
\end{eqnarray}
Applying (\ref{6.3}) and (\ref{6.4}), we deduce $\Psi$ is a strict contraction map on $\mathcal{A}$. Thus, there exists a unique solution $u\in\mathcal{A}$
to equation (\ref{2.2}).
It is easy to find $\Psi(0)=S(t)u_0\in \mathcal{A}$. Then
 by  taking $\xi_1=u$ and $\xi_2=0$ in (\ref{6.3}), we have
  \begin{eqnarray*}
\|u\|_{\mathcal{A}}=\|\Psi(u)\|_{\mathcal{A}}\leq \|\Psi(u)-\Psi(0)\|_{\mathcal{A}}+\|\Psi(0)\|_{\mathcal{A}}.
 \end{eqnarray*}
which, together with Lemma \ref{lemma2.1} and (\ref{6.3}), leads to
  \begin{eqnarray*}
\|u\|_{\mathcal{A}}\leq 2\|u_0\|_{\infty}.
 \end{eqnarray*}
Thus, (\ref{6.5}) holds in this case.

Secondly, if (\ref{6.4}) does not hold, we choose another $T_0\in[0,T]$ such that $\mathcal{L}\frac{T_0^{1-\kappa}}{1-\kappa}<\frac{1}{2}$,
and work on this problem in $[0,T_0]$. Then, by a standard iteration argument,  we can get the desired results in the second case.

\textit{Step 2: We will prove (\ref{2.3a}).}

By the duality technique (see Section 2 of \cite{Daners}), we can obtain that there exists $C=C(n,\sigma, \gamma)$ such that
\begin{eqnarray}\label{6.7}
\|u(t)\|_{1}\leq Ce^{C\mathcal{L}^{\vartheta}t}\|u_0\|_{1},
 \end{eqnarray}
 where constants $\mathcal{L}$, $\vartheta$ are given in (\ref{2.3}).
 This, along with (\ref{6.5}),  and  Riesz-Thorin's interpolation theorem, shows
 \begin{eqnarray}
\|u(t)\|_{p}\leq Ce^{C\mathcal{K}^{\vartheta}t}\|u_0\|_{p}, \:\textrm{for each}\:1\leq p\leq\infty.
 \end{eqnarray}

   We assume that $u_0\neq0$ (In fact, Lemma \ref{lemma2.2} is valid when $u_0=0$). Multiplying equation (\ref{2.2})  by $u$  and integrating it over $\mathbb{R}^n$,  we obtain
 \begin{eqnarray}\label{6.6}
&&\frac{1}{2}\frac{d}{dt}\int_{\mathbb{R}^n}|u|^2dx+\int_{\mathbb{R}^n}|\nabla u|^2dx=-\int_{\mathbb{R}^n}au^2dx.
 \end{eqnarray}
Let $\sigma'\in[1,+\infty]$ satisfy $\frac{1}{\sigma}+\frac{1}{\sigma'}=1$. Combining with H\"{o}lder and Gagliardo-Nirenberg's inequalities, yields
 \begin{eqnarray*}
\int_{\mathbb{R}^n}|a||u|^2dx
\leq\|a\|_{\sigma}\|u\|_{2\sigma'}^2\leq C(n)\|a\|_{\sigma}(\int_{\mathbb{R}^n}|\nabla u|^2dx)^{\frac{n}{2\sigma}}
(\int_{\mathbb{R}^n}|u|^2dx)^{\frac{2\sigma-n}{2\sigma}}.
 \end{eqnarray*}
 Then, by Young's inequality, it yields
 \begin{eqnarray*}
 C(n)\|a\|_{\sigma}(\int_{\mathbb{R}^n}|\nabla u|^2dx)^{\frac{n}{2\sigma}}
(\int_{\mathbb{R}^n}|u|^2dx)^{\frac{2\sigma-n}{2\sigma}}&\leq& C(n)(\int_{\mathbb{R}^n}|\nabla u|^2dx)^{\frac{n}{2\sigma}} (\mathcal{L}^{\vartheta}\int_{\mathbb{R}^n}|u|^2dx)^{\frac{2\sigma-n}{2\sigma}}\\
&\leq&\frac{1}{2}\int_{\mathbb{R}^n}|\nabla u|^2dx+C(n)\mathcal{L}^{\vartheta}\int_{\mathbb{R}^n}|u|^2dx.
 \end{eqnarray*}
Therefore,
 \begin{eqnarray*}
|\int_{\mathbb{R}^n} au^2dx|
\leq\frac{1}{2}\int_{\mathbb{R}^n}|\nabla u|^2dx+C(n)\mathcal{L}^{\vartheta}\int_{\mathbb{R}^n}|u|^2dx.
 \end{eqnarray*}
It, along with (\ref{6.6}), indicates
 \begin{eqnarray*}
\frac{d}{dt}\int_{\mathbb{R}^n}|u|^2dx+\int_{\mathbb{R}^n}|\nabla u|^2dx
\leq C(n)\mathcal{L}^{\vartheta}\int_{\mathbb{R}^n}|u|^2dx.
 \end{eqnarray*}
Using Gagliardo-Nirenberg's inequality and (\ref{6.7}), it indicates that
 \begin{eqnarray*}
\big(\int_{\mathbb{R}^n}|u|^2dx\big)^{\frac{n+2}{n}}\leq(Ce^{C\mathcal{L}^{\vartheta}t}\|u_0\|_{1})^{\frac{4}{n}}\int_{\mathbb{R}^n}|\nabla u|^2dx,
 \end{eqnarray*}
 where $C=C(n,\sigma, \gamma)$.
Let $g(t):=\int_{\mathbb{R}^n}|u|^2(x,t)dx$, $t\in[0,T]$. Clearly, $g(t)$ satisfies the following differential inequality:
 \begin{eqnarray*}
g'(t)+Ag(t)^{1+\frac{2}{n}}\leq Bg(t),
 \end{eqnarray*}
where $A=(Ce^{C\mathcal{L}^{\vartheta}t}\|u_0\|_{1})^{-\frac{4}{n}}$ and $B=C\mathcal{L}^{\vartheta}$.
By  Lemma \ref{lemma2.2}, we obtain
  \begin{eqnarray*}
g(t)\leq\big(\frac{n}{2At}\big)^{\frac{n}{2}}e^{Bt}.
 \end{eqnarray*}
Thus,
\begin{eqnarray}\label{6.9}
 \|u\|_{2}\leq Ce^{C\mathcal{L}^{\vartheta}t}t^{-\frac{n}{4}} \|u_0\|_{1}.
 \end{eqnarray}
Using the duality technique again, we obtain
\begin{eqnarray*}
 \|u\|_{\infty}\leq Ce^{C\mathcal{L}^{\vartheta}t}t^{-\frac{n}{4}} \|u_0\|_{2}.
 \end{eqnarray*}
This, along with (\ref{6.9}), shows
\begin{eqnarray*}
 \|u\|_{\infty}\leq Ce^{C\mathcal{L}^{\vartheta}t}t^{-\frac{n}{2}}\|u_0\|_{1},
 \end{eqnarray*}
 where the number $C$ only depends on $(n,\sigma, \gamma)$.
It, together with (\ref{6.5}) and Riesz-Thorin's interpolation theorem, indicates (\ref{2.3a}).
This completes the proof.
\end{proof}

\subsection{Proof of Theorem \ref{theorem3.2}}
At present, we have not found a complete proof of  theorem \ref{theorem3.2}.
 The first part follows from  \cite{Miao} (see Section 2.2 in \cite{Miao}). For completeness, we present the details below.
\begin{proof}
Let  $T^*>0$, which will be chosen later.  Define
$\mathcal{E}:=L^\infty((0,T^*);L^q(\mathbb{R}^n))\cap L_{loc}^\infty((0,T^*);L^{pq}(\mathbb{R}^n)),$
and
$\mathcal{B}:=\{\xi\in \mathcal{E} \:  |  \:\|\xi(t)\|_q\leq 2\|y_0\|_{q} \:\textrm{ and }\:t^{\alpha}\|\xi(t)\|_q\leq 2\|y_0\|_{q},\:\forall t\in[0,T^*]\},$
where $\alpha=\frac{n(p-1)}{2pq}.$ Obviously, $0<\alpha<\frac{1}{p}<1.$
We equip $\mathcal{B}$ with the distance
$d(\xi,\eta)=\sup_{0<t<T^*}t^\alpha\|\xi(t)-\eta(t)\|_{pq}.$
The following proof will be split into three steps.

\noindent{\it Step 1. We prove
that for some $T^*>0$, equation (\ref{1.1}) admits  a solution  $y\in C([0,T^*]; L^q(\mathbb{R}^n)).$}

For each $\xi\in \mathcal{B}$, we define functions $y(\cdot;\xi)$ as follows:
\begin{eqnarray*}
y(t;\xi):=e^{t\triangle}y_0+\int_0^te^{(t-s)\triangle}f(\xi(s))ds,\:t\in[0,T^*].
\end{eqnarray*}
It is obvious that $y(\cdot;\xi)\in \mathcal{E}$. Now, we define a map $\Phi:\:\mathcal{B}\mapsto \mathcal{E}$ by
$\Phi(\xi):=y(\cdot;\xi),\;\;\mbox{for}\;\; \xi\in \mathcal{B}.$
Applying ($A_3$) and Lemma \ref{lemma2.1}, we first have
\begin{eqnarray}\label{2.4}
\|y(t;\xi)\|_q
&\leq& \|e^{t\triangle}y_0\|_q+\int_0^t\|e^{(t-s)\triangle}f(\xi(s))\|_{q}ds \nonumber\\
&\leq&\|y_0\|_q+\int_0^t \|f(\xi(s))\|_{q}ds \nonumber\\
&\leq&\|y_0\|_q+C\big(\sup_{0<t<T^*}t^{\alpha}\|\xi(t)\|_{pq}\big)^p\int_0^t s^{-\alpha p}ds\nonumber\\
&\leq&\|y_0\|_q+C\frac{T^{*1-\alpha p}}{1-\alpha p}\big(2\|y_0\|_q\big)^p,\:\:t\in[0,T^*].
\end{eqnarray}
Then,
 \begin{eqnarray}\label{2.5}
t^{\alpha}\|y(t;\xi)\|_{pq}
&\leq& t^{\alpha}\|e^{t\triangle}y_0\|_{pq}+t^{\alpha}\int_0^t\|e^{(t-s)\triangle}f(\xi(s))\|_{pq}ds \nonumber\\
&\leq&\|y_0\|_q+t^{\alpha}\int_0^t (t-s)^{-\alpha}\|f(\xi(s))\|_{q}ds \nonumber\\
&\leq&\|y_0\|_q+Ct^{\alpha}\big(\sup_{0<t<T^*}t^{\alpha}\|\xi(t)\|_{pq}\big)^p\int_0^t (t-s)^{-\alpha}s^{-\alpha p}ds\nonumber\\
&\leq&\|y_0\|_q+CT^{*1-\alpha p}\big(2\|y_0\|_q\big)^p\cdot\int_0^1 (1-\rho)^{-\alpha}\rho^{-\alpha p}d\rho,\:\:t\in[0,T^*].
\end{eqnarray}
By the same argument, we obtain for $\xi,\eta\in\mathcal{B}$ and $t\in[0,T^*]$
 \begin{eqnarray}\label{2.5aa}
t^{\alpha}\|y(t;\xi)-y(t;\eta)\|_{pq}
&\leq&t^{\alpha}\int_0^t\|e^{(t-s)\triangle}(f(\xi(s))-f(\eta(s)))\|_{pq}ds \nonumber\\
&\leq&t^{\alpha}\int_0^t (t-s)^{-\alpha}\|f(\xi(s))-f(\eta(s))\|_{q}ds \nonumber\\
&\leq&Ct^{\alpha}\sup_{0<t<T^*}\big(\|\xi(t)\|_{pq}^{p-1}+\|\eta(t)\|_{pq}^{p-1}\big)\int_0^t (t-s)^{-\alpha}s^{-\alpha}ds\cdot d(\xi,\eta)\nonumber\\
&\leq&CT^{*1-\alpha p}\big(2\|y_0\|_q\big)^{p-1}\cdot\int_0^1 (1-\rho)^{-\alpha}\rho^{-\alpha}d\rho\cdot d(\xi,\eta).
\end{eqnarray}
Using (\ref{2.4}), (\ref{2.5}), and (\ref{2.5aa}), it shows that if $T^*$ is small enough (which only depends on $\|y_0\|_q, p$, and $q$)
then $\Phi(\mathcal{B})\subseteq \mathcal{B}$  and  $\Phi:\:\mathcal{B}\mapsto \mathcal{B}$ is a strict   contraction.
By the Banach fixed-point theorem, there exists a unique $y\in\mathcal{B}$ satisfying (\ref{1.1}).

Now, we are going to prove $y\in C([0,T^*];L^q(\mathbb{R}^n)).$ Since $y\in\mathcal{B}$, $\alpha p<1$, and ($A_3$) holds,
we deduce $f(y)\in L^1((0,T^*); L^q(\mathbb{R}^n))$.
It follows from $y(t;\xi):=e^{t\triangle}y_0+\int_0^te^{(t-s)\triangle}f(y(s))ds$ that $y\in C([0,T^*];L^q(\mathbb{R}^n))$
 and it is a solution of (\ref{1.1}).


\noindent{\it Step 2. We prove (\ref{3.5}), which shows $y\in L_{loc}^\infty((0,T^*); L^\infty(\mathbb{R}^n))$.}\\
 There are only two possibilities:\\
\textbf{Case 1: $q>\frac{n(p-1)}{2}$, $q\geq1$, and $q\geq p-1$.\\
Case 2: $q>\frac{n(p-1)}{2}$, and $1\leq q<p-1$.\\}
Here we will only give the prove of Case 2, and  another case can be easily obtained  using Lemma \ref{lemma2.3} (refer to Theorem 3.2 in \cite{G. Zheng}).
Similar to the proof of (\ref{2.4}) and (\ref{2.5}), we have
\begin{eqnarray}\label{2.15aa}
\|y(t)\|_q+t^{\alpha}\|y(t)\|_{pq}\leq C\|y_0\|_q,\:\:t\in[0,T^*],
\end{eqnarray}
where $\alpha=\frac{n(p-1)}{2pq}$, and $y(\cdot)$ is the solution of (\ref{1.1}) obtained in the first step with initial data $y_0$.
Define a function as follows:
\begin{eqnarray*}
 a(x,t):=
  \left\{
\begin{array}{ll}
\frac{f(y)}{y},&  u(x,t)\neq0,\\
f'(0),&  \textrm{otherwise},
\end{array}
\right.
 \end{eqnarray*}
where $t\in[0,T^*]$.
By direct computations,  we have $f(y)=a(x,t)y$ and $a(x,t)\in L^\infty((\frac{t}{2},t);L^{\sigma}(\mathbb{R}^n))$ with  $\sigma=\frac{pq}{p-1}$
 ($\sigma\geq1$ and $\sigma>\frac{n}{2}$). Then, it follows from (\ref{2.15aa}) that
\begin{eqnarray*}
\|a\|_{L^\infty((\frac{t}{2},t); L^{\sigma})}\leq C\sup_{\frac{t}{2}<s<t}\|y(s)\|_{pq}^{p-1}\leq
C\sup_{\frac{t}{2}<s<t}s^{-\alpha(p-1)}\|y_0\|_q^{p-1}\leq Ct^{-\alpha(p-1)}\|y_0\|_q^{p-1}.
\end{eqnarray*}
Then, we apply Lemma \ref{lemma2.3} on the interval $(\frac{t}{2},t)$ with $\gamma=pq$, and obtain
\begin{eqnarray}
\|y(t)\|_{\infty}&\leq& C\exp( Ct(t^{-\alpha(p-1)\vartheta}\|y_0\|_q^{(p-1)\vartheta}) )t^{-\frac{n}{2\gamma}}\|y(\frac{t}{2})\|_{pq} \nonumber \\
&\leq& C\exp( Ct(t^{-\alpha(p-1)\vartheta}\|y_0\|_q^{(p-1)\vartheta}))t^{-\frac{n}{2\gamma}}t^{-\alpha}\|y_0\|_{q}\nonumber \\
&\leq& C\exp( Ct(t^{-\alpha(p-1)\vartheta}\|y_0\|_q^{(p-1)\vartheta}))t^{-\frac{n}{2q}}\|y_0\|_{q}
\end{eqnarray}
It, along with (\ref{2.15aa}), leads to (\ref{3.5}) and
 $y\in L_{loc}^\infty((0,T^*); L^\infty(\mathbb{R}^n))$.


\noindent{\it Step 3. We
show that the solution of  (\ref{1.1}) is unique in $C([0,T^*]; L^q(\mathbb{R}^n))\cap L_{loc}^\infty((0,T^*); L^\infty(\mathbb{R}^n))$.}

Write $\mathcal{A}:=C([0,T^*]; L^q(\mathbb{R}^n))\cap L_{loc}^\infty((0,T^*); L^\infty(\mathbb{R}^n))$. We first suppose that $\bar{y}(\cdot)\in\mathcal{A}$  is another  solution to (\ref{1.1}) and let $U(\cdot)y_0$ is the solution constructed
 by the step 1 with the initial data $y_0$.
  There are only two possibilities:\\
\textbf{Case 1: $y_0\in L^q(\mathbb{R}^n)\cap L^\infty(\mathbb{R}^n)$.} Using  Theorem \ref{theorem3.1}, we obtain  $\bar{y}(\cdot)=U(\cdot)y_0$. Namely, $\bar{y}(\cdot)$ is the solution constructed via the above contraction argument.\\
\textbf{Case 2: $y_0\in L^q(\mathbb{R}^n)$, and $y_0\notin L^\infty(\mathbb{R}^n)$.}  Let
 \begin{eqnarray*}
K=\{\bar{y}(t)\:|\:t\in[0,T^*]\}, \: \textrm{ and }\: M=\sup_{t\in[0,T^*]}\|\bar{y}(t)\|_{q}.
\end{eqnarray*}
Similar to the calculation in (\ref{2.4}), (\ref{2.5}), and (\ref{2.5aa}), we obtain there exists a uniform $T_1>0$ such
that $U(t)\bar{y}(s)$ (where $s\in [0,T^*]$) is well define for all $t\in[0,T_1]$. Moreover, we have
\begin{eqnarray}\label{5.18}
\|U(t)\bar{y}(s)\|_q\leq 2M\: \textrm{ and } \: t^{\alpha}\|U(t)\bar{y}(s)\|_{pq}\leq 2M,
\end{eqnarray}
where $\alpha=\frac{n(p-1)}{2pq},$ $s\in [0,T^*]$ and $t\in [0,T_1]$. Since $\bar{y}(s)\in L^\infty(\mathbb{R}^n)$ (where $s\in(0,T^*]$), it follows from
\textbf{Case 1} that $\bar{y}(t+s)=U(t)\bar{y}(s)$ for $0\leq t\leq\min\{T_1, T^*-s\}$.
It, together with (\ref{5.18}) deduce
\begin{eqnarray}\label{5.19}
\|\bar{y}(t+s)\|_q\leq 2M\: \textrm{ and } \: t^{\alpha}\|\bar{y}(t+s)\|_{pq}\leq 2M,
\end{eqnarray}
where $t\in [0,T_1]$ and  $t+s\leq T^*$. Taking $s=t$ in (\ref{5.19}), we deduce
 \textbf{there exists two numbers $\delta>0$  and $T_0>0$ such that $t^{\alpha}\|\bar{y}(t)\|_{pq}\leq\delta$ ( $\alpha=\frac{n(p-1)}{2pq}$) for all $t\in(0,T_0]$.}

Then, by substituting $y$ for $\xi$ and $\bar{y}$ for $\eta$ in (\ref{2.5aa}), and then making use of the standard Gronwall's inequality, we can obtain the uniqueness. This completes the proof.
\end{proof}


\end{document}